\documentclass[12pt,times]{article}
\title{Nonlocal flocking dynamics: Learning the fractional order of PDEs from particle simulations}

\author{Zhiping Mao$^{1}$,
        Zhen Li$^{1}$\footnote{Corresponding author (\href{mailto:zhen_li@brown.edu}{zhen\_li@brown.edu})}~,
        and George Em Karniadakis$^{1}$ \\
\small{$^{1}$ Division of Applied Mathematics, Brown University, Providence, Rhode Island, 02912, USA} }

\date{\today}

\RequirePackage{lineno}
\usepackage[colorlinks=true]{hyperref}
\usepackage{graphicx}
\usepackage{caption}
\usepackage{subcaption}
\usepackage{multirow,array,color,mathrsfs,subcaption}

\usepackage{amsmath,fancyhdr,amsthm,amssymb,amsfonts,version}
\usepackage{bbm,version}
\usepackage[section]{placeins}
\usepackage{stmaryrd}
\usepackage{bm}

\DeclareGraphicsExtensions{.eps,.mps,.pdf,.jpg,.png}
\graphicspath{{figures/}{../figures/}}

\usepackage{dsfont}

\usepackage[top=1.5cm,bottom=2.5cm,left=1.5cm,right=1.5cm]{geometry}

\usepackage[figuresright]{rotating}
\usepackage{extarrows}

\newtheorem{thm}{Theorem}

\newtheorem{exam}{Example}

\begin{document}

\maketitle
\vspace{-20pt}
\begin{abstract}
Flocking refers to collective behavior of a large number of interacting entities, where the interactions between discrete individuals produce collective motion on the large scale.
We employ an agent-based model to describe the microscopic dynamics of each individual in a flock, and use a fractional partial differential equation (fPDE) to model the evolution of macroscopic
quantities of interest. The macroscopic models with phenomenological interaction functions are derived by applying the continuum hypothesis to the microscopic model. Instead of specifying the fPDEs with an \emph{ad hoc} fractional order for nonlocal flocking dynamics, we learn the effective nonlocal influence function in fPDEs directly from particle trajectories generated by the agent-based simulations. We demonstrate how the learning framework is used to connect the discrete agent-based model to the continuum fPDEs in one- and two-dimensional nonlocal flocking dynamics. In particular, a Cucker-Smale particle model is employed to describe the microscale dynamics of each individual, while Euler equations with non-local interaction terms are used to compute the evolution of macroscale quantities. The trajectories generated by the particle simulations mimic the field data of tracking logs that can be obtained experimentally. They can be used to learn the fractional order of the influence function using a Gaussian process regression model implemented with the Bayesian optimization. We show in one- and two-dimensional benchmarks that the numerical solution of the learned Euler equations solved by the finite volume scheme can yield correct density distributions consistent with the collective behavior of the agent-based system solved by the particle method. The proposed method offers new insights on how to scale the discrete agent-based models to the continuum-based PDE models, and could serve as a paradigm on extracting effective governing equations for nonlocal flocking dynamics directly from particle trajectories.
\end{abstract}

\noindent
\textbf{Keywords}: fractional PDEs, Gaussian process, Bayesian optimization, fractional Laplacian, conservation laws

\section{Introduction}
Collective behavior is a widespread phenomenon in physical and biological systems, and also in social dynamics, such as collective motion of self-propelled particles~\cite{2018Nagai}, bird flocking~\cite{antoniou2013congestion}, fish schooling~\cite{niwa1994self}, swarms of insects~\cite{theraulaz1995modelling}, trails of foraging ants~\cite{beekman2001phase}, herds of mammals~\cite{okubo1986dynamical}, complex networks~\cite{mahmoodi2017self}, etc. Although this collective dynamics are at very different scales and levels of complexity, the mechanism of self-organization, where local interactions for the individuals lead to a coherent group motion, is very general and transcends the detailed objects~\cite{2008Giardina}.
To this end, simulation and modeling of both physical~\cite{2001Levine} and biological~\cite{2018Bernardi} systems have driven a rich field of research to explore how individual behavior engenders large scale collective motion. There are generally two different approaches to investigate the underlying mechanics: 1) at the microscopic level, agent-based models are developed to simulate dynamics of each individual in flocks, such as swarms, tori, and polarized groups; 2) at the macroscopic level, the mathematical modeling approach is based on continuum models described by partial differential equations (PDEs).\
Agent-based models assume behavioral rules at the level of the individual, whose microscopic dynamics is governed by an evolution equation affected by the social forces, including alignment of velocities, attraction, and short-range repulsion, acting on it~\cite{2008Giardina}.
Because the number of agents in a coupled dynamics is often large, the agent-based models cannot be solved exactly but can be easily implemented using numerical simulations. Agent-based simulation captures the detailed dynamics of each individual and can handle the increasing complexity of realworld flocking systems. However, for a flocking dynamics involving a large number of agents, the agent-based model becomes computationally expensive~\cite{2013Jaffry}.
Alternatively, by assuming that a flocking group is already formed and considering a large enough number of agents to make the mean field approximation meaningful, Eulerian models can be derived by applying the continuum hypothesis to the microscopic dynamics, leading to PDEs for the macroscopic quantities, i.e., mean velocity and population density~\cite{2008Giardina}.

Advances in digital imaging~\cite{2008Ballerini} and high-resolution lightweight GPS devices~\cite{2010Nagy} allow gathering long-time and long-distance trajectories of individuals in flocks. For example,
Ballerini et al.~\cite{2008Ballerini} used stereometric and computer vision techniques to measure 3D individual birds positions in huge starling flocks (up to $2\,600$ European starlings) in the field.
Nagy et al.~\cite{2010Nagy} employed high-resolution lightweight GPS devices to collect track logs of homing pigeons flying in flocks of up to 13 individuals and analyzed the hierarchical group dynamics in the pigeon flocks.
Lukeman et al.~\cite{2010Lukeman} recorded time series of 2D swimming flocks (up to 200 ducks) by oblique overhead photography and analyzed each individual's position, velocity and trajectory.
Tunstrom et al.~\cite{2013Tunstrom} used automated tracking software for fish to obtain detailed data regarding the individual positions and velocities of schooling fish (up to 300 fish) over long periods of time.
Despite significant development of these experimental ideas, a gap between flocking theory/modeling and experiment still exists. Due to the rich diversity of theoretical models with distinct forms of interaction~\cite{2010Lukeman}, it is necessary to assess which of them correspond to actual behavior in specified collective motions in nature. Numerical simulations alone cannot tackle this problem because flocking patterns similar to experimental observations can be generated by significantly different model mechanisms. Although some mechanisms through which collective motion is achieved have been qualitatively understood by simulation and modeling, a quantitative test of the model assumptions in realistic data is still challenging. Therefore, it is of great interest to establish a direct connection between the individual trajectories obtained in experiments and the effective governing equations in the mathematical approach.

In the present work, we use agent-based simulations for self-organized dynamics to generate particle trajectories of individuals in flocks to mimic field data that can be obtained by computer vision techniques~\cite{2008Ballerini} or high-resolution GPS data of flocks~\cite{2016Chen}. We also use continuum models formulated in PDEs to capture the evolution of macroscopic quantities.
It has been reported that the mathematical models using local rules do not fully describe the social interactions between individuals in flocks~\cite{2007Eftimie}. In fact, animals in flocks generally do not interact mechanically, rather they are influenced by other individuals a certain distance away~\cite{2010Cristiani}. Animal communications can result in nonlocal interactions~\cite{2013Giuggioli}, which should be modeled by nonlocal mathematical models.
For nonlocal flocking dynamics, instead of specifying the phenomenological fractional partial differential equations (fPDEs) with an empirical fractional order, we use a Gaussian process regression model implemented with the Bayesian optimization to learn the effective nonlocal influence function for fPDEs directly from particle trajectories, as shown in Fig.~\ref{fig:sketch}. Both one- and two-dimensional nonlocal flocking dynamics are performed to demonstrate the procedure how the proposed learning-framework is used to connect the discrete agent-based model and the continuum-based fPDEs model.

\begin{figure}
  \centering
  \includegraphics[width=0.68\textwidth]{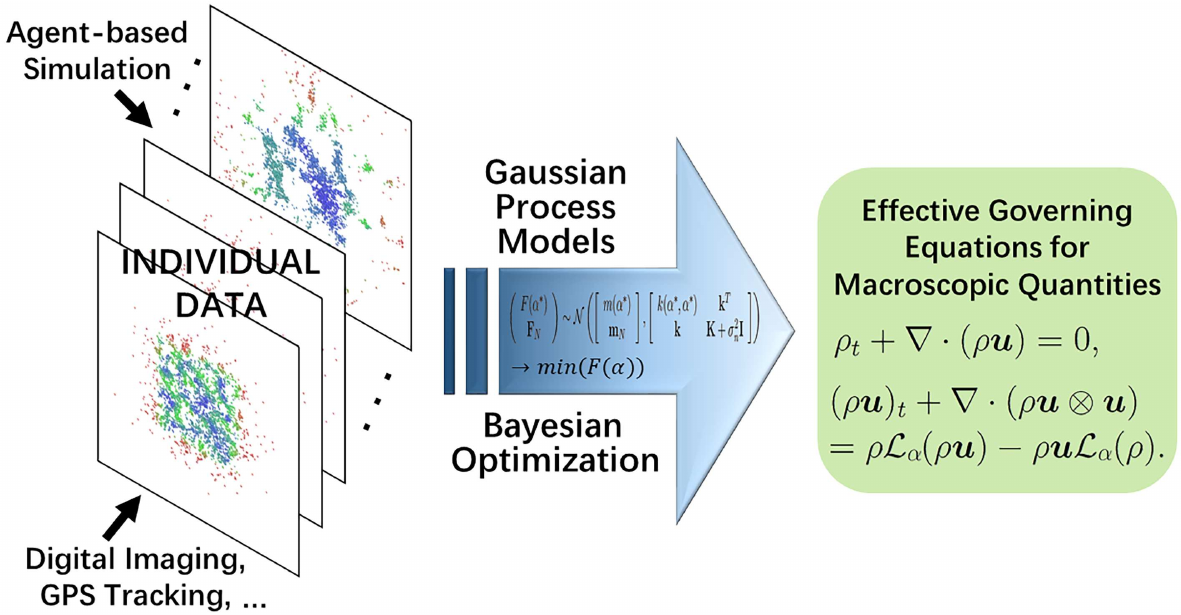}\\
  \caption{Schematic of learning the fractional order of governing PDEs of nonlocal flocking dynamics from time series of particle trajectories, which can be either obtained from experiments, i.e., digital imaging and GPS tracking, or by agent-based simulations. A Gaussian process model implemented with the Bayesian optimization is employed to minimize the loss and to extract the effective nonlocal influence function.}\label{fig:sketch}
\end{figure}

For the agent-based system, we adopt the Cucker-Smale model with a nonlocal alignment term developed in~\cite{2007Cucker}. Using a singular kernel makes the nonlocal alignment term to be a nonlinear function involving fractional Laplacian for the Euler equations which has \emph{mass and momentum conservations}. The choice of the fractional order of the fractional Laplacian offers a flexibility in modeling the complicated interactions between agents.
It is worth noting that the computation of the corresponding Euler equations with nonlocal forces is very challenging. The challenges are two-fold due to the fact that the alignment term is not only \emph{nonlocal} but also \emph{nonlinear}. A typical challenge for nonlocal problems is the high computation cost of matrix-vector multiplication, which is usually $O(K^2)$ with $K$ being the total number of degrees of freedom at each time step when using a explicit time scheme. We resolve this issue by using the fast Fourier transform, resulting in a computation cost reduced to $O(K\log K)$. For the issue of nonlinearity, we use a piecewise constant approximation, i.e., a finite volume scheme, which can significantly simplify the computation of the nonlinear nonlocal term. Moreover, we show that the proposed finite volume scheme \emph{preserves the mass and momentum}, both in one and two space dimensions.

The remainder of this paper is organized as follows. In Section~\ref{sec:22}, we introduce a typical agent-based model and the corresponding Euler equations for flocking dynamics. In Section~\ref{sec:2}, we describe the details of numerical algorithms to solve the agent-based model and to discretize the fPDEs, including the statistical algorithm for sampling macroscopic quantities from discrete particle data, the velocity-Verlet algorithm for time integration, and the finite volume approximation for the Euler system of equations. Subsequently, in Section~\ref{sec:3} we present the numerical examples of one- and two-dimensional nonlocal flocking dynamics, and compare the results obtained by the agent-based simulations and by solving the fPDEs. In Section~\ref{sec:4}, we introduce the learning framework to infer the effective influence function from particle trajectories using a Gaussian process regression model implemented with the Bayesian optimization. Finally, we conclude with a brief summary and discussion in Section~\ref{sec:5}.

\section{Mathematical models}\label{sec:22}
In this section, we briefly introduce a typical governing equation of the agent-based model, followed by the Euler equations for flocking dynamics, as well as the conservation laws of the Euler system of equations.
\begin{enumerate}
  \item \textbf{Agent-based model}:
The agent's behavior is characterized by its position and velocity. We consider the following agent-based model, i.e., the C-S model with a alignment term, at the microscale:
\begin{equation}\label{eq:Micro}
    \begin{aligned}
      &\dot{\bm{x}}_i = \bm{v}_i, \\
      &\dot{\bm{v}}_i = \frac{1}{N}\sum_{j=1}^N \phi(|\bm{x}_i- \bm{x}_j|) (\bm{v}_j - \bm{v}_i), \quad (\bm{x}_i,\bm{v}_i) \in \mathbb{R}^n \times \mathbb{R}^n,
    \end{aligned}
\end{equation}
%
%
where $\phi(\cdot)$ is a kernel denoting the influence function.

  \item \textbf{Euler equations}:
For large crowds, i.e., $N\gg 1$, by using the mean field limit argument,  the particle system governed by Eq.~\eqref{eq:Micro} can lead to the following macroscale Euler system of equations:
\begin{equation}\label{Eulern}
\begin{aligned}
  &\rho_t + \nabla \cdot(\rho \bm{u}) = 0,\\
  &\bm{u}_t + \bm{u}\cdot \nabla \bm{u} = [\mathcal{L},\bm{u}](\rho), \; (\bm{x},t ) \in \Omega\times \mathbb{R}^+,
\end{aligned}
\end{equation}
%
where $\rho$ is the density and $\bm u$ is the velocity of the macro-model, $[\mathcal{L},\bm{u}](\rho): = \mathcal{L}(\rho \bm{u}) - \mathcal{L}(\rho) \bm{u}$ is the commutator forcing and
$$\mathcal{L}(f)(\bm x) := \int_{\mathbb{R}^n} \phi(|\bm x- \bm y|) \left(f(\bm{y}) - f(\bm{x}) \right)d\bm{y}. $$

In this paper, we consider the singular kernel, i.e., $\phi(r) := c_{n,\alpha}|r|^{-(n+\alpha)}$, where $c_{n,\alpha} = \frac{\alpha \Gamma(\frac{n+\alpha}{2})} {2\pi^{\alpha+n/2}\Gamma(1-\alpha/2)}$, which is associated with the action of the \emph{fractional Laplacian} $\mathcal{L}(f) = -(-\Delta)^{\alpha/2}f,\, 0<\alpha<2$, namely,
\begin{equation*}
    \mathcal{L}(f)(\bm x) = \text{p.v.}\; c_{n,\alpha}\int_{\mathbb{R}^n} \frac{f(\bm{y}) - f(\bm{x})}{|\bm x-\bm y|^{n+\alpha}}d\bm{y},
\end{equation*}
where $\text{p.v.}$ means the principle value.
The corresponding forcing is given by the following singular integral
\begin{equation}\label{}
    [\mathcal{L},\bm{u}](\rho) = \text{p.v.}\; c_{n,\alpha}\int_{\mathbb{R}^n} \frac{u(\bm{y}) - u(\bm{x})}{|\bm x-\bm y|^{n+\alpha}}\rho(\bm y)d\bm{y}.
\end{equation}
In the one-dimensional case, the global regularity for $\alpha\in[1,2]$ is proved by Shvydkoy and Tadmor \cite{2017Shvydkoy}. Also, they proved fast velocity alignment as the velocity $u(\cdot,t)$ approaches a constant state, $u\rightarrow \tilde{u}$, with exponentially decaying slope \cite{2017Shvydkoy_a}.

The Euler system of equations  given by Eq.~\eqref{Eulern} \emph{preserves mass and momentum.} In particular, the system \eqref{Eulern} can be rewritten as
\begin{equation}\label{Eulern:conserve}
    \begin{aligned}
      \rho_t + \nabla \cdot(\rho \bm{u}) &= 0,\\
      (\rho\bm{u})_t + \nabla \cdot(\rho \bm{u}\otimes \bm{u}) &= \rho \mathcal{L}(\rho \bm{u}) - \rho \bm{u} \mathcal{L}(\rho) .
    \end{aligned}
\end{equation}
Obviously, the \emph{conservation of mass} can be obtained by integrating the first equation of the above system over $\mathbb{R}^n$. In addition, because the nonlocal operator $\mathcal{L}$ is assumed self-adjoint, i.e.,
 $$\int_{\mathbb{R}^n}\left(\rho \mathcal{L}(\rho \bm{u}) - \rho \bm{u} \mathcal{L}(\rho)  \right) = 0,$$
we have  the \emph{conservation of momentum} by integrating the second equation of the above system over $\mathbb{R}^n$.
\end{enumerate}

\section{Numerical methods}\label{sec:2}
\subsection{Microscale: Agent-based model}
In this subsection, we briefly introduce how to use the particle method to solve the agent-based model in the form of Eq.~\eqref{eq:Micro}.
Let us first consider the one-dimensional case; to do this, we use the following particle method:
Assume that the macroscale density and velocity of a large number of particles are $\rho_0(x)$ and $v_0(x)$ satisfying
$$\int_{\mathbb{R} } \rho_0(x)dx = 1,$$
and the support of the density $\rho(x) $ is $\Lambda: = [a,b]$. We then obtain the initial position and velocity of each individual particle as follows:
\begin{itemize}
  \item Step 1. We first sample particles in the domain according to the initial density, i.e., give the initial position of each particle. To this end, we divide the domain $\Lambda $ into $P$ non-overlapping subdomains $\Lambda_p : = [x_{p-1},x_{p}]$ with $a = x_0< x_1<\ldots <x_{P} = b$ and compute the density in each subdomain $\Lambda_p$:
  $$\rho_p = \int_{x_{p-1}}^{x_{p}} \rho_0(x), \quad p = 1,2,\ldots, P.$$
  \item Step 2. Then we have that the number of particles sampled in each subdomain is $N_p = N\cdot \rho_p, p =1,2,\ldots,P$, where $N\gg 1$ is the total number of sampled particles. For each subdomain $\Lambda_p$, we divide it into $N_p$ uniform subintervals $[x_{p,q}, x_{p,q+1}]$ with  $x_{p-1} = x_{p,0}< x_{p,1}<\ldots <x_{p,N_p} = x_{p}$ and take the middle point of each subinterval, i.e., $(x_{p,q}+ x_{p,q+1})/2$, as the position of the particle.
  \item Step 3. We now compute the initial velocity of each particle. Since we have already assigned an initial position for each particle, then its initial velocity can be computed directly, i.e., for each particle $x_i,\,i= 1,\ldots, N$, its initial velocity is given by $v_i = v_0(x_i)$.
\end{itemize}

Given initial positions and velocities for all particles, we then use the equation~\eqref{eq:Micro} for the time integration. To this end, we use the \emph{Velocity Verlet} (VV) method~\cite{2018Li}, which is a most commonly used method due to the symplecticity, numerical stability and ease of implementation. It integrates position using half-step values of velocity. In particular, we let
$$a(x,v) = F(x,v) = \frac{dv(t)}{dt}. $$
Since we have set the mass equals to 1, at time $t_n = n\Delta t$ with $\Delta t = t_{n+1} -t_{n}, \, n = 0, 1, \ldots$, the VV scheme has the following form
\begin{equation}\label{VV}
\begin{aligned}
 &v^{n+1/2} = v^n  + a^n \Delta t /2, \; &x^{n+1} = x^{n} + v^{n+1/2} \Delta t, \; \\
 &a^{n+1} = F(x^{n+1}, v^{n+1/2}) ,\; &v^{n+1} = v^{n} + a^{n+1}\Delta t/2,
\end{aligned}
\end{equation}
where $x^n, \, v^n$ are the numerical solutions of the position and velocity at time $t_n$.

For the two-dimensional case, the procedure is the same as the one for the one-dimensional case. We thus omit the details here.

\subsection{Macroscale: Euler system of equations }
In the last subsection, we solve the agent-based model using the particle method, while in this subsection, we use the \emph{finite volume method} (FVM) to solve the macroscale model, i.e., the Euler equations~\eqref{Eulern}.
\subsubsection{One-dimensional case}
We firstly solve the Euler system of equations  \eqref{Eulern} using the FVM in one space dimension.
In particular, assuming the support of the density, i.e., the computational domain, is $\Lambda = [a,b]$, we consider the following one-dimensional system in conserved form which reads as
\begin{equation}\label{Euler1d:conserve1d}
    \bm{w}_t + \bm{f}(\bm{w})_x = \bm{s}(\bm{w}), \quad t\ge0, \;x\in \Lambda,
\end{equation}
where
$$\bm{w} = [\rho, m]^T,\; \bm{f}(\bm{w}) = [m, \rho u^2]^T,\; \bm{s}(\bm{w}) = [0,\rho \mathcal{L}(m) - m \mathcal{L}(\rho)]^T $$
with $m = \rho u$,
and discretize it by using the finite volume discretization.

Now we introduce the FVM for the above equation.
Firstly, let us divide the computational domain $\Lambda$ into $K$ non-overlapping subintervals
\begin{equation*}
    a = x_{1/2}<x_{3/2}< \cdots < x_{K + 1/2} = b
\end{equation*}
and denote
$$I_j = (x_{j-1/2}, x_{j+1/2}),\, j = 1,\ldots, K. $$
Let $\Delta x = \Delta x_j = x_{j+1} -x_j$ and $\bar{\bm{w}}_j(t)$ be the cell average of $\bm{w}(x,t)$  in cell $I_j$, i.e.,
$$\bar{\bm{w}}_j(t) = \frac{1}{\Delta x}\int_{I_j} \bm{w}(x,t)dx,\, j = 1,\ldots, K.$$
A finite volume scheme based on the cell averages has the following form
\begin{equation}\label{DGform}
    \bar{\bm{w}}_j^{n+1} = \bar{\bm{w}}_j^{n} - \lambda \left[ \bm{h}(\bm{w}_{j+\frac{1}{2}}^{-,n},\bm{w}_{j+\frac{1}{2}}^{+,n}) - \bm{h}(\bm{w}_{j-\frac{1}{2}}^{-,n},\bm{w}_{j-\frac{1}{2}}^{+,n}) \right] + \lambda\int_{I_j} \bm{s}(\bm{w}(x))dx,
\end{equation}
where $\lambda = \frac{\Delta t}{\Delta x}$ with $\Delta t$ is the time step, $\bar{\bm{w}}_j^{n}$ is the numerical solution of the cell average of the cell $I_j$ at time $t_n = n\Delta t$ and $\bm{h}(\cdot,\cdot)$ is a numerical flux. We assume  that at the cell interface $x= x_{j-1/2}$ we have two numerical approximations $\bm{w}_l=(\rho_l, m_l)^T$ and $\bm{w}_r=(\rho_r, m_r)^T$ from the left and right, respectively. We use in one-dimension the Godunov flux given by~\cite{2013Yang, Bouchut2003SINUM}
\begin{equation*}
    (\bm{h}(\bm{w}_l,\bm{w}_r))^T = (\widehat{\rho u}_{j-1/2}, \widehat{\rho u^2}_{j-1/2})
    = \left\{
\begin{array}{llllll}
(m_l,\rho_l u_l^2),  & u_l>0,\, u_r>0, \\
(0,0), \quad & u_l\le 0,\, u_r>0, \\
(m_r,\rho_r u_r^2),  & u_l \le 0,\, u_r \le 0,\\
(m_l,\rho_l u_l^2),  & u_l > 0,\, u_r \le 0,\, v>0, \\
(m_r,\rho_r u_r^2),  & u_l > 0,\, u_r \le 0,\, v<0, \\
(\frac{m_l+m_r}{2}, \frac{\rho_l u_l^2 + \rho_r u_r^2}{2}),  & u_l > 0,\, u_r \le 0,\, v=0,
\end{array}
\right.
\end{equation*}
where
\begin{equation*}
    u_l = \frac{m_l}{\rho_l}, \quad u_r = \frac{m_r}{\rho_r}, \quad v = \frac{\sqrt{\rho_l}u_l + \sqrt{\rho_r}u_r}{\sqrt{\rho_l} + \sqrt{\rho_r}}.
\end{equation*}

We now discuss the discretizaiton of the nonlocal term $\bm{s}(\bm{w}(x))$.
To do this, we need to discretize the fractional Laplacian of a given function.
For a given function $f(x)$ defined on $\mathbb{R}$ with compact support $\Lambda$, we write the nonlocal operator $\mathcal{L}(f)$ in the following equivalent form  
\begin{equation*}\label{FL2}
    \mathcal{L}(f)  = \int_{\mathbb{R}^n} \left(f(x+z) - f(z)\right){\phi(|z|)} dz.
\end{equation*}
This formula makes it easy to discretize $\mathcal{L}(f)$. We approximate the above integral by using the rectangle quadrature rule. In order to compute $\mathcal{L}(f)$, we will use all the information in the whole domain, i.e., all the values of $f_k,\,k\in \mathbb{Z}$.
In practical applications, the considered function $f$ (in our work, it is density $\rho$ or momentum $m$) is usually constant  or vanishes outside a finite domain.
We therefore discretize $\mathcal{L}(f)$ using the cell $I_k,\, k\in \mathbb{Z}$ up to $|y| = \widehat{K} \Delta x$, which goes to $\infty$ as $\Delta x \rightarrow 0$ and approximate the remaining parts with two unbounded space steps $(-\infty, -\widehat{K} \Delta x)$, and $(\widehat{K} \Delta x,\infty)$. For instance, we define
$$g(\rho,m)_j :=  \int_{I_j} \rho \mathcal{L}(m) dx $$
and approximate it as follows:
Since $\rho$ is a constant in each cell, then we have for $j = 1,\ldots, K$,
\begin{equation*}
\begin{aligned}
    & g(\rho,m)_j =  \bar{\rho}_j \int_{I_j} dx \int_{\mathbb{R}} \left({m(x+z)  - m(x)}\right){\phi(|z|)} dz\\
                 = &\bar{\rho}_j \int_{I_j} dx \int_{0<|z|\le \widehat{K}\Delta x} \left({m(x+z)  - m(x)}\right){\phi(|z|)} dz \\
                  &+ \bar{\rho}_j \int_{I_j} dx \int_{|z|> \widehat{K}\Delta x} \left({m(x+z)  - m(x)}\right){\phi(|z|)} dz\\
                \approx & \bar{\rho}_j \Delta x^2 \sum_{0<k\le \widehat{K}} \left({\bar{m}_{j+k} +\bar{m}_{j-k} - 2\bar{m}_{j}}\right) {\phi_k }
                + \bar{\rho}_j \Delta x \left({\bar m_{j+\widehat{K}+1} + \bar m_{j-\widehat{K}-1} - 2\bar m_{j}} \right){\phi_{\widehat{K}+1}},
\end{aligned}
\end{equation*}
where $\phi_k = \phi(|k\Delta x|)$ if $k\le \widehat{K}$ and  $\phi_{\widehat{K}+1} = \int_{z> \widehat{K}\Delta x} \phi(|z|) dz$, which can be calculated directly or by using Gauss quadrature (e.g., Gauss Laguerre quadrature).
The above equation can be written in a matrix form:
\begin{equation}\label{nonlocal:m}
    g(\rho,m)_j \approx  \bar{\rho}_j \Delta x \left(G \vec{m} \right)_j,
\end{equation}
where $\vec{m} = \{\bar{m}_{j}\}_{1\le j\le K}$, and
$\left(G \vec{m} \right)_j$ denotes the $j$-th component of $G \vec{m}$,
$G$ is a matrix whose entries are given by: for $j = 1,\ldots, K$ and $i = \pm 1, \pm 2, \ldots,$
\begin{equation*}
      G_{jj} = -2\sum_{k=1}^{\widehat{K}} \Delta x\phi_{k} -2 \phi_{\widehat{K}+1},\;
      G_{j,j+i} = \Delta x\phi_{|i|}.
\end{equation*}
Since the kernel function $\phi(|z|)$ is a positive and symmetric function, we can observe  from the above that $G$  has the following properties:
\begin{enumerate}
  \item $G$ is symmetric;
  \item $G$ is a Toeplitz matrix.
\end{enumerate}
Similarly, we have
\begin{equation}\label{nonlocal:rho}
    g(m,\rho)_j \approx \bar{m}_j \Delta x \left(G \vec{\rho} \right)_j, \; j = 1,\ldots, K.
\end{equation}
Thus, combining equations \eqref{DGform}, \eqref{nonlocal:m} and \eqref{nonlocal:rho}, we have the following finite volume scheme: for $j = 1,\ldots,K$,
\begin{equation}\label{DGform1}
    \bar{\bm{w}}_j^{n+1} = \bar{\bm{w}}_j^{n} - \lambda \left[ \bm{h}(\bm{w}_{j+\frac{1}{2}}^{-,n},\bm{w}_{j+\frac{1}{2}}^{+,n}) - \bm{h}(\bm{w}_{j-\frac{1}{2}}^{-,n},\bm{w}_{j-\frac{1}{2}}^{+,n}) \right] + \bm{g}(\rho,m)_j,
\end{equation}
where $\bm{g}(\rho,m)_j = \left[0,\lambda \Delta x (\bar{\rho}_j (G \vec{m})_j - \bar{m}_j(G \vec{\rho} )_j )\right]^T$.


Note that the matrix $G$ is a Toeplitz matrix, therefore, this allows us to use the fast matrix vector procedure to compute $G \vec{m}$ or $G \vec{\rho}$ in $O(K\log K)$ operation~\cite{2014Wang}. Furthermore,
since the matrix $G$ is symmetric, we can show that the above scheme possesses mass and momentum conservation laws. We state it in the following theorem:
\begin{thm}
Assuming  that $\bm{w}$ has a compact support or periodic boundary conditions in the domain $\Lambda$, then the mass and momentum of the numerical solutions of~\eqref{DGform1} are conserved, i.e.,  for $n>1$, it holds
\begin{eqnarray}
  \int_{\Omega} \rho^n(x)dx &=& \int_{\Omega} \rho_0(x) dx; \label{conservation:mass} \\
  \int_{\Omega} \rho^n(x)u^n(x)dx &=& \int_{\Omega} \rho_0(x)u_0(x) dx. \label{conservation:momentum}
\end{eqnarray}
\end{thm}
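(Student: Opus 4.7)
The plan is to mimic at the discrete level the two continuous arguments that were used in equations~\eqref{Eulern:conserve}: telescoping of the flux differences (for the local transport part) and self-adjointness of the nonlocal operator (for the source term). Because the update~\eqref{DGform1} is written in manifestly conservative form componentwise, this boils down to two separate summations, one for the $\rho$-component and one for the $m$-component of $\bar{\bm{w}}_j^{n+1}$, after multiplying by $\Delta x$ and summing over $j=1,\ldots,K$.

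For the mass equation~\eqref{conservation:mass} I would read off the first component of~\eqref{DGform1}. The source term there is zero by the definition of $\bm{g}$, so the only contribution is the flux difference $\widehat{\rho u}_{j+1/2}-\widehat{\rho u}_{j-1/2}$. Summing $\Delta x\,\bar\rho_j^{n+1}$ over $j$, the telescoping collapses to the boundary fluxes at $j=\tfrac{1}{2}$ and $j=K+\tfrac{1}{2}$, which vanish by the compact-support assumption (since $\rho$ and $m$ are zero there) or cancel by periodicity. Iterating over $n$ then gives~\eqref{conservation:mass}. This is the standard conservation-law argument and I do not expect any surprises.

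The momentum identity~\eqref{conservation:momentum} is the more delicate one, and this is where I expect the main work. The flux part is handled by the same telescoping argument applied to $\widehat{\rho u^2}_{j+1/2}$, again using compact support or periodicity to kill the boundary contributions. What remains is the discrete nonlocal source: summing $\Delta x$ times the second component of $\bm{g}(\rho,m)_j$ over $j$, one must show
\begin{equation*}
\sum_{j=1}^{K}\Bigl(\bar\rho_j\,(G\vec m)_j-\bar m_j\,(G\vec\rho)_j\Bigr)=0,
\end{equation*}
which is exactly the discrete analogue of $\int(\rho\mathcal{L}(\rho\bm u)-\rho\bm u\,\mathcal{L}(\rho))=0$. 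I would rewrite the sum as the bilinear pairing $\vec\rho^{\,T}G\vec m-\vec m^{\,T}G\vec\rho$ and invoke the symmetry $G^T=G$ listed just after the definition of $G$ to conclude that the two terms cancel. Since $\rho^n u^n=m^n$ in each cell by construction, induction on $n$ then yields~\eqref{conservation:momentum}.

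The only genuine subtlety I foresee is making sure that the truncation of the nonlocal integral at $|z|=\widehat{K}\Delta x$, together with the ``tail'' weight $\phi_{\widehat{K}+1}$, is absorbed consistently into the same symmetric Toeplitz matrix $G$; as long as the same $G$ is used in both $g(\rho,m)_j$ and $g(m,\rho)_j$ (which is the case in~\eqref{nonlocal:m}--\eqref{nonlocal:rho}), symmetry is automatic and the cancellation goes through exactly, independently of $\widehat{K}$. The compact-support/periodic hypothesis is what lets me treat the off-domain entries of $\vec\rho$ and $\vec m$ in the Toeplitz product as zero (or periodic extensions), so that the bilinear identity above is literally a finite-dimensional statement and no boundary correction is needed.
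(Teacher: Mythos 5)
Your proposal is correct and follows essentially the same route as the paper's proof: telescoping the flux differences over $j$ under the compact-support/periodicity assumption, noting the first component of the source is zero for mass, and using the symmetry of $G$ to get $\sum_j \bar\rho_j (G\vec m)_j = \sum_j \bar m_j (G\vec\rho)_j$, so the momentum source sums to zero. Your explicit bilinear form $\vec\rho^{\,T}G\vec m-\vec m^{\,T}G\vec\rho$ and the remark about the tail weight $\phi_{\widehat{K}+1}$ being built into the same symmetric Toeplitz matrix are just slightly more careful renderings of the paper's argument, not a different one.
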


\begin{proof}
Denote $\bm{h}_{j+1/2}^n: = \bm{h}(\bm{w}_{j+\frac{1}{2}}^{-,n},\bm{w}_{j+\frac{1}{2}}^{+,n})$, we can rewritten \eqref{DGform1} as:
$$\bar{\bm{w}}_j^{n+1} - \bar{\bm{w}}_j^{n} + \lambda \left[ \bm{h}_{j+1/2}^n - \bm{h}_{j-1/2}^n \right]  =  \bm{g}(\rho,m)_j.$$
Summing  over $j$ and noting  that $\bm{w}$ has a compact support or periodic boundary conditions, we obtain
\begin{equation}\label{conservepf1}
    \int_{\Omega} \bm{w}^{n+1}(x)dx - \int_{\Omega} \bm{w}^n(x)dx   = \bm{g}(\rho,m)_j.
\end{equation}
Then the conservation of mass, i.e., equation \eqref{conservation:mass} can be readily obtained since the first component of $\bm{g}(\rho,m)_j$ is equal to zero. For the conservation of momentum, i.e., equation \eqref{conservation:momentum}, we proceed as follows: We have that the second component of $\bm{g}(\rho,m)_j$ is given by
$$\lambda \Delta x \sum_{j} \left(  \bar{\rho}_j \left(G \vec{m} \right)_j - \bar{m}_j \left(G \vec{\rho} \right)_j \right).$$
Note that $G$ is symmetric, then we have
\begin{equation*}
\begin{aligned}
  \sum_{j} \bar{\rho}_j \left(G \vec{m} \right)_j
  = &\sum_{j} \sum_{k}\bar{\rho}_j G_{jk}\bar{m}_k
  = \sum_{k} \sum_{j} \bar{m}_k G_{jk}\bar{\rho}_j \\
   = &\sum_{k} \sum_{j} \bar{m}_k G_{kj}\bar{\rho}_j
   = \sum_{k}\bar{m}_k \left(G \vec{\rho} \right)_k.
\end{aligned}
\end{equation*}
Thus, the second component of $\bm{g}(\rho,m)_j$ vanishes and the result follows from \eqref{conservepf1}.
\end{proof}

\subsubsection{Two-dimensional case}
Now let us consider the Euler system of equations  \eqref{Eulern} in two space dimensions.
Assuming the support of the density, i.e., the computational domain, is a rectangle domain $\Omega = [a,b]\times [c,d]$,
then the two-dimensional Euler equations in conservative form  read as
\begin{equation}\label{Euler1d:conserve1d}
    \bm{w}_t + \bm{f}(\bm{w})_x  + \bm{g}(\bm{w})_y= \bm{s}(\bm{w}), \quad t\ge0,\;  x\in \Omega,
\end{equation}
where
\begin{equation*}
\begin{aligned}
  &\bm{w} = [\rho, m_1, m_2]^T,\; \bm{f}(\bm{w}) = [m_1, \rho u^2, \rho uv]^T,\; \bm{g}(\bm{w}) = [n, \rho uv, \rho v^2]^T,\\
  &\bm{s}(\bm{w}) = [0,\rho \mathcal{L}(m_1) - m_1 \mathcal{L}(\rho), \rho \mathcal{L}(m_2) - m_2 \mathcal{L}(\rho)]^T
\end{aligned}
\end{equation*}
with $m_1 = \rho u$ and $m_2 = \rho v$.

We now solve the above system by using the finite volume scheme.
Similarly as we did for the one-dimensional case, we use the uniform rectangular meshes, namely, $\Delta x = \Delta x_{i} = x_{i+1/2} -x_{i-1/2}, \; \Delta y = \Delta y_{j} = y_{j+1/2} -y_{j-1/2}$, $i = 1,\ldots,K,\, j = 1,\ldots, L$,
where
\begin{equation*}
    a = x_{1/2}<x_{3/2}<\ldots< x_{K+1/2} = b, \; c = y_{1/2}<y_{3/2}<\ldots< y_{L+1/2} = d.
\end{equation*}
Let the cell $I_{ij}$ be defined by $I_{ij} := [{x_{i-1/2}},{x_{i + 1/2}}] \times[{y_{j-1/2}},{y_{j + 1/2}}]$, $\Delta t$ be the time step, and  $\bar{\bm{w}}_{ij}(t)$ be the cell average of $\bm{w}(x,y,t)$ in the cell $I_{ij} $, i.e.,
$$\bar{\bm{w}}_{ij}(t) = \frac{1}{\Delta x \Delta y}\int_{x_{i-1/2}}^{x_{i+1/2}} \int_{y_{j-1/2}}^{y_{j+1/2}} \bm{w}(x,y,t)dxdy,\, i = 1,\ldots,K,\, j = 1,\ldots, L.$$
A finite volume scheme based on the cell averages has the following form
\begin{equation}\label{DGform2d}
\begin{aligned}
    \bar{\bm{w}}_{ij}^{n+1} = &\bar{\bm{w}}_{ij}^{n} - \lambda \int_{y_{j-1/2}}^{y_{j+1/2}} \left[ \bm{h}_1(\bm{w}_{i+\frac{1}{2},j}^{-,n},\bm{w}_{i+\frac{1}{2},j}^{+,n}) - \bm{h}_1(\bm{w}_{i-\frac{1}{2},j}^{-,n},\bm{w}_{i-\frac{1}{2},j}^{+,n}) \right]dy\\
    & - \lambda \int_{x_{i-1/2}}^{x_{i+1/2}} \left[ \bm{h}_2(\bm{w}_{i,j+\frac{1}{2}}^{-,n},\bm{w}_{i,j+\frac{1}{2}}^{+,n}) - \bm{h}_2(\bm{w}_{i,j-\frac{1}{2}}^{-,n},\bm{w}_{i,j-\frac{1}{2}}^{+,n}) \right]dx\\
     &+ \lambda \int_{x_{i-1/2}}^{x_{i+1/2}} \int_{y_{j-1/2}}^{y_{j+1/2}} \bm{s}(\bm{w}(x,y,t_n))dxdy,
\end{aligned}
\end{equation}
where $\lambda = \frac{\Delta t}{\Delta x \Delta y}$,  $\bm{h}_1(\cdot,\cdot), \,\bm{h}_2(\cdot,\cdot)$ are one-dimensional  numerical fluxes, and $\bar{\bm{w}}_{ij}^{n}$ is the numerical solution of the cell average of cell $I_{ij}$ at time $t_n$.
Suppose that $(x,y)= (x_{i-1/2},y_0)$ is a point on the vertical cell interface, at which we have two numerical approximations $\bm{w}_l=(\rho_l, (m_1)_l,(m_2)_l)^T$ and $\bm{w}_r=(\rho_r, (m_1)_r, (m_2)_r)^T$ from the left and right, respectively. For the two-dimensional case, we also use the Godunov flux given by~\cite{2013Yang}
\begin{equation*}
\begin{aligned}
   & (\bm{h}_1(\bm{w}_l,\bm{w}_r))^T = (\widehat{\rho u}_{j-1/2}, \widehat{\rho u^2}_{j-1/2}, \widehat{\rho u v}_{j-1/2})\\
    = &
    \left\{
\begin{array}{ll}
((m_1)_l,\rho_l u_l^2, \rho_l u_l v_l),  & u_l>0,\, u_r>0, \\
(0,0,0), \quad & u_l\le 0,\, u_r>0, \\
((m_1)_r,\rho_r u_r^2, \rho_r u_r v_r),  & u_l \le 0,\, u_r \le 0,\\
((m_1)_l,\rho_l u_l^2, \rho_l u_l v_l),  & u_l > 0,\, u_r \le 0,\, v>0, \\
((m_1)_r,\rho_r u_r^2, \rho_r u_r v_r),  & u_l > 0,\, u_r \le 0,\, v<0, \\
\frac{1}{2}((m_1)_l+(m_1)_r, \rho_l u_l^2 + \rho_r u_r^2, (m_1)_lv_l + (m_1)_rv_r),  & u_l > 0,\, u_r \le 0,\, v=0,
\end{array}
\right.
\end{aligned}
\end{equation*}
where
\begin{equation*}
    (u_l,v_l) = \left(\frac{(m_1)_l}{\rho_l}, \frac{(m_2)_l}{\rho_l}\right), \quad
    (u_r,v_r) = \left(\frac{(m_1)_r}{\rho_r}, \frac{(m_2)_r}{\rho_r}\right), \quad
    v = \frac{\sqrt{\rho_l}u_l + \sqrt{\rho_r}u_r}{\sqrt{\rho_l} + \sqrt{\rho_r}}.
\end{equation*}
The numerical flux $(\bm{h}_2(\bm{w}_l,\bm{w}_r))^T = (\widehat{\rho u}, \widehat{\rho uv}, \widehat{\rho v^2})$ can be defined similarly on the horizontal cell interfaces.

Similarly to the one-dimensional problem, we now discuss how to discretize the nonlocal terms, i.e., the source terms $\bm{s}(\bm{w})$, for the two-dimensional problem.
In order to discretize $\bm{s}(\bm{w})$, we first need to discretize the nonlocal operator $\mathcal{L}(f)$ for a given function $f$. To this end, for a given function $f(\bm{x}),\bm{x}: = (x,y)\in \mathbb{R}^2$ compactly supported in the domain $\Omega$, we have that the nonlocal term $\mathcal{L}(f)$ is equivalent to the following formula:
\begin{equation*}\label{FL2d2}
    \mathcal{L}(f) =   \int_{|\bm z|>0} \left({f(\bm x+\bm z) - f(\bm z)}\right){\phi(|\bm z|)} d{\bm z},
\end{equation*}
where $\bm{z} = (z_1,z_2)$.
For $\nu = 1,2$, we define
$$g(\rho,m_\nu)_{ij} :=  \int_{I_{ij} } \rho \mathcal{L}(m_\nu) dx dy, \quad 1\le i\le K, \, 1\le j\le L. $$
Writing it into the vector form, we have the vector form for $g(\rho,m_\nu)_{ij}$
\begin{equation*}
\begin{aligned}
  \bm{g}(\rho,m_\nu) = & [g(\rho,m_\nu)_{11},\ldots, g(\rho,m_\nu)_{K1};g(\rho,m_\nu)_{12},\ldots, g(\rho,m_\nu)_{K2}; \\
  & \ldots; g(\rho,m_\nu)_{1L},\ldots, g(\rho,m_\nu)_{KL}]^T.
\end{aligned}
\end{equation*}
$\bm{g}(\rho,m_\nu)$ can be computed using similar arguments as the one used for the one-dimensional case. In particular,
\begin{equation*}
\begin{aligned}
    & g(\rho,m_{\nu})_{ij} =  \bar{\rho}_{ij} \int_{I_{ij}} dxdy \int_{\mathbb{R}} \left({m_{\nu}(x+z_1, y + z_2)  - m_{\nu}(x,y)}\right){\phi(|\bm{z}|)} dz_1dz_2\\
                 = &\bar{\rho}_{ij} \int_{I_{ij}} dxdy \int_{D_1}\left({m_{\nu}(x+z_1, y + z_2)  - m_{\nu}(x,y)}\right){\phi(|\bm{z}|)} dz_1dz_2 \\
                  &+ \bar{\rho}_{ij} \int_{I_{ij}} dxdy \int_{D_2} \left({m_{\nu}(x+z_1, y + z_2)  - m_{\nu}(x,y)}\right){\phi(|\bm{z}|)} dz_1dz_2\\
                \approx & \bar{\rho}_{ij} \Delta x^2 \Delta y^2
                \sum_{\substack{0<k\le \widehat{K}\\0<l\le \widehat{L}}} {\phi_{kl} }  \big((\bar{m}_{\nu})_{i+k,j+l} +(\bar{m}_{\nu})_{i-k,j+l} + (\bar{m}_{\nu})_{i+k,j-l} +(\bar{m}_{\nu})_{i-k,j-l} - 4(\bar{m}_{\nu})_{ij}\big)\\
                & + \bar{\rho}_{ij} \Delta x\Delta y  {\phi_{\widehat{K}+1,\widehat{L}+1}} \big( (\bar{m}_{\nu})_{i+\widehat{K}+1, j+\widehat{L}+1} + (\bar{m}_{\nu})_{i-\widehat{K}-1, j+\widehat{L}+1} \\
                & + (\bar{m}_{\nu})_{i+\widehat{K}+1, j-\widehat{L}-1} + (\bar{m}_{\nu})_{i-\widehat{K}-1, j-\widehat{L}-1} - 4(\bar{m}_{\nu})_{ij} \big),
\end{aligned}
\end{equation*}
where $D_1 = \{(z_1,z_2): 0<|z_1|\le \widehat{K}\Delta x,0<|z_2|\le \widehat{L}\Delta y\}$, $D_2 = \{(z_1,z_2): |z_1|> \widehat{K}\Delta x,|z_2|> \widehat{L}\Delta y\}$, and
\begin{align*}
  &\phi_{11} = \phi(\sqrt{(\Delta x/2)^2 + (\Delta y/2)^2}),\;
  \phi_{k1} = \phi(\sqrt{(k\Delta x)^2 + (\Delta y/2)^2}), \\
  &\phi_{1l} = \phi(\sqrt{(\Delta x/2)^2 + (l\Delta y)^2}),\;
  \phi_{kl} = \phi(\sqrt{(k\Delta x)^2 + (l\Delta y)^2}), 2\le k\le \widehat{K},\, 2\le l\le \widehat{L},\\
  &\phi_{\widehat{K}+1,\widehat{L}+1} = \int_{D_3} \phi(|\bm z|) d\bm z,
\end{align*}
where $D_3 = \{(z_1,z_2):z_1>0, z_2>0\} \setminus \{(z_1,z_2):0\le z_1\le \widehat{K} \Delta x, 0\le z_2\le \widehat{L}\Delta y\}$.
The above equation can be written in a matrix form:
\begin{equation}\label{nonlocal:m2d}
    \bm{g}(\rho,m_\nu) \approx \Delta x \Delta y \vec{\bm{\rho}} * \left(\bm{G} \vec{\bm{m}}_{\nu} \right),
\end{equation}
where the symbol $*$ means component-wise multiplication and
\begin{align*}
 &\vec{\bm{\rho}} = [\bar{\rho}_{11}, \ldots, \bar{\rho}_{K1};\bar{\rho}_{12},\ldots, \bar{\rho}_{K2}; \ldots; \bar{\rho}_{1L},\ldots, \bar{\rho}_{KL}]^T, \\
 &\vec{\bm{m}}_{\nu} = [(\bar{m}_{\nu})_{11}, \ldots, (\bar{m}_{\nu})_{K1};(\bar{m}_{\nu})_{12},\ldots, (\bar{m}_{\nu})_{K2}; \ldots; (\bar{m}_{\nu})_{1L},\ldots, (\bar{m}_{\nu})_{KL}]^T,
\end{align*}
and
$\bm{G}$ is a block-Toeplitz-Toeplitz-block matrix given by
$$ \bm{G} =
\left(
  \begin{array}{cccc}
    G^{11} & G^{12} & \cdots & G^{1L} \\
    G^{21} & G^{22} & \cdots & G^{2L} \\
    \vdots & \vdots & \ddots & \vdots \\
    G^{L1} & G^{L2} & \cdots & G^{LL} \\
  \end{array}
\right),
$$
where $G^{ij}, 1\le i,j\le L$ are Toeplitz matrices, where for $i = 1,\ldots, L\, ,j = \pm 1,\pm 2,\ldots$ and $m = 1,\ldots,K,\, n = \pm 1,\pm 2,\ldots,$
\begin{align*}
  &G^{ii}_{mm} = -4\sum_{k=1}^{\widehat{K}} \sum_{l=1}^{\widehat{L}} \Delta x\Delta y\phi_{kl} -4 \phi_{\widehat{K}+1,\widehat{L}+1},\;
  G^{ii}_{m,m+n} = \Delta x\Delta y\phi_{|n|1}, \\
  &G^{i,i+j}_{m,m+n} = \Delta x\Delta y \phi_{|m| |j|}.
\end{align*}
Similarly, we have
\begin{equation}\label{nonlocal:rho2d}
    \bm{g}(m_\nu,\rho) \approx \Delta x \Delta y \vec{\bm{m}}_{\nu} * \left(\bm{G} \vec{\bm{\rho}} \right).
\end{equation}
Thus, by combining equations \eqref{DGform2d}, \eqref{nonlocal:m2d} and \eqref{nonlocal:rho2d}, we have the following finite volume scheme for the two-dimensional problem:
\begin{equation}\label{DGform2d2}
\begin{aligned}
    \bar{\bm{w}}_{ij}^{n+1} = &\bar{\bm{w}}_{ij}^{n} - \lambda \int_{y_{j-1/2}}^{y_{j+1/2}} \left[ \bm{h}_1(\bm{w}_{i+\frac{1}{2},j}^{-,n},\bm{w}_{i+\frac{1}{2},j}^{+,n}) - \bm{h}_1(\bm{w}_{i-\frac{1}{2},j}^{-,n},\bm{w}_{i-\frac{1}{2},j}^{+,n}) \right]dy\\
    & - \lambda \int_{x_{i-1/2}}^{x_{i+1/2}} \left[ \bm{h}_2(\bm{w}_{i,j+\frac{1}{2}}^{-,n},\bm{w}_{i,j+\frac{1}{2}}^{+,n}) - \bm{h}_2(\bm{w}_{i,j-\frac{1}{2}}^{-,n},\bm{w}_{i,j-\frac{1}{2}}^{+,n}) \right]dx\\
     &+ \lambda \bm g(\rho, m_1 , m_2)_{ij},
\end{aligned}
\end{equation}
where $$\bm g(\rho, m_1 , m_2)_{ij} = \left[0, [\bm{g}(\rho,m_1)]_{ij} - [\bm{g}(m_1,\rho)]_{ij}, [\bm{g}(\rho,m_2)]_{ij} - [\bm{g}(m_2,\rho)]_{ij} \right]^T,$$
here $[V]_{ij} = [reshape(V,K,L)]_{ij}$ for a given $K\times L$ column vector $V$, where `\emph{reshape}' is a Matlab function.

Since $\bm{G}$ is a block-Toeplitz-Toeplitz-block matrix, so the computation of $\bm g(\rho, m_1 , m_2)$ can be implemented in $O(KL\log(KL))$ operations by using the fast matrix vector multiplication~\cite{WangTian2014CMAME}.


\begin{thm}
Assuming  that $\bm{w}$ has a compact support or periodic boundary conditions in the domain $\Omega$, then the mass and momentum of the numerical solution of \eqref{DGform2d2} are conserved, i.e.,  for $n>1$, it holds
\begin{eqnarray}
  \int_{\Omega} \rho^n(x,y)dx dy &=& \int_{\Omega} \rho_0(x,y) dx dy; \label{conservation:mass2d} \\
  \int_{\Omega} \rho^n(x,y)u^n(x,y)dx dy &=& \int_{\Omega} \rho_0(x,y)u_0(x,y) dx dy; \label{conservation:momentumu}\\
  \int_{\Omega} \rho^n(x,y)v^n(x,y)dx dy &=& \int_{\Omega} \rho_0(x,y)v_0(x,y) dx dy. \label{conservation:momentumv}
\end{eqnarray}
\end{thm}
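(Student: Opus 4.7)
The plan is to follow the same strategy as in the one-dimensional proof: sum the update formula \eqref{DGform2d2} over all cells $I_{ij}$, use the compact support or periodic boundary conditions to cancel the flux contributions, and then show that the nonlocal source $\bm{g}(\rho,m_1,m_2)_{ij}$ sums to zero componentwise. First I would introduce the shorthand $\bm{H}^1_{i+1/2,j}$ and $\bm{H}^2_{i,j+1/2}$ for the numerical fluxes integrated over the vertical and horizontal cell edges, rewrite \eqref{DGform2d2} in the compact form
\begin{equation*}
\bar{\bm{w}}_{ij}^{n+1}-\bar{\bm{w}}_{ij}^{n}
+\lambda\bigl[\bm{H}^1_{i+1/2,j}-\bm{H}^1_{i-1/2,j}\bigr]
+\lambda\bigl[\bm{H}^2_{i,j+1/2}-\bm{H}^2_{i,j-1/2}\bigr]
=\lambda\,\bm{g}(\rho,m_1,m_2)_{ij},
\end{equation*}
and then sum over $1\le i\le K$, $1\le j\le L$. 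Under either the compact support or periodic hypothesis, each interior flux cancels with its neighbor and the boundary fluxes vanish, leaving
\begin{equation*}
\sum_{i,j}\bigl(\bar{\bm{w}}_{ij}^{n+1}-\bar{\bm{w}}_{ij}^{n}\bigr)\,\Delta x\,\Delta y
=\lambda\,\Delta x\,\Delta y\sum_{i,j}\bm{g}(\rho,m_1,m_2)_{ij}.
\end{equation*}

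The first component of $\bm{g}(\rho,m_1,m_2)_{ij}$ is identically zero, so summing the first row gives \eqref{conservation:mass2d} directly. For \eqref{conservation:momentumu} and \eqref{conservation:momentumv} I would reduce the statement, for each $\nu\in\{1,2\}$, to the discrete bilinear identity
\begin{equation*}
\sum_{i,j}\bigl([\bm{g}(\rho,m_\nu)]_{ij}-[\bm{g}(m_\nu,\rho)]_{ij}\bigr)=0,
\end{equation*}
which by \eqref{nonlocal:m2d} and \eqref{nonlocal:rho2d} amounts to
\begin{equation*}
\vec{\bm{\rho}}^{\,T}\bigl(\bm{G}\vec{\bm{m}}_\nu\bigr)=\vec{\bm{m}}_\nu^{\,T}\bigl(\bm{G}\vec{\bm{\rho}}\bigr).
\end{equation*}
This is exactly the two-dimensional analogue of the one-line symmetry manipulation used at the end of the 1D proof, so it will follow at once once symmetry of $\bm{G}$ is established.

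The main obstacle, and really the only nontrivial step, is therefore verifying that the block-Toeplitz-Toeplitz-block matrix $\bm{G}$ is symmetric. I would argue this from the entry formulas: because the kernel $\phi(|\bm z|)$ depends only on $|\bm z|$, the weights satisfy $\phi_{kl}=\phi_{|k||l|}$, so each Toeplitz block $G^{ij}$ is itself symmetric (the entry $G^{ij}_{m,m+n}$ depends only on $|n|$ and $|i-j|$ through $\phi_{|n|\,|i-j|}$, matching $G^{ij}_{m+n,m}$), and moreover $G^{ij}=G^{ji}$ because $\phi_{|m|\,|j|}=\phi_{|m|\,|i-j|}=\phi_{|m|\,|j-i|}$; one also has to include the tail correction $\phi_{\widehat K+1,\widehat L+1}$, which is a symmetric diagonal contribution and causes no trouble. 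Combined, these two facts yield $\bm{G}^T=\bm{G}$, hence the bilinear identity above, and the two momentum equalities \eqref{conservation:momentumu}–\eqref{conservation:momentumv} follow in the same manner as the 1D case. I would close with a brief remark that the proof is entirely parallel to the one-dimensional theorem, the only genuinely new ingredient being the block-level symmetry of $\bm{G}$.
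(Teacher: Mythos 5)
Your proposal follows essentially the same route as the paper's own proof: rewrite \eqref{DGform2d2} with shorthand fluxes, sum over $i,j$ using the compact support or periodic boundary conditions to cancel the flux terms, get mass conservation from the vanishing first component of the source, and get both momentum identities from the symmetry of $\bm{G}$ exactly as in the one-dimensional theorem. The only difference is that the paper simply asserts ``$G$ is symmetric'' and invokes the 1D argument, whereas you explicitly verify the block-level symmetry of the block-Toeplitz-Toeplitz-block matrix from the radial kernel weights, which is a correct (and slightly more complete) filling-in of that asserted step.
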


\begin{proof}
Denote $\bm{h}_{1,i+1/2}^n: = \bm{h}_1(\bm{w}_{i+\frac{1}{2},j}^{-,n},\bm{w}_{i+\frac{1}{2},j}^{+,n})$ and $\bm{h}_{2,j+1/2}^n: = \bm{h}_2(\bm{w}_{i,j+\frac{1}{2}}^{-,n},\bm{w}_{i,j+\frac{1}{2}}^{+,n})$, we can rewrite \eqref{DGform2d2} as:
\begin{equation*}
\begin{aligned}
    & \bar{\bm{w}}_{ij}^{n+1} - \bar{\bm{w}}_{ij}^{n}
    + \lambda \int_{y_{j-1/2}}^{y_{j+1/2}} \left[ \bm{h}_{1,i+1/2}^n - \bm{h}_{1,i-1/2}^n \right]dy + \lambda \int_{x_{i-1/2}}^{x_{i+1/2}} \left[\bm{h}_{2,j+1/2}^n - \bm{h}_{2,j-1/2}^n \right]dx\\
    =  & \lambda \bm g(\rho, m_1 , m_2)_{ij}.
\end{aligned}
\end{equation*}
Summing over $i$ and $j$, and noting that $\bm{w}$ has a compact support or periodic boundary conditions in the domain $\Omega$, we obtain
\begin{equation}\label{2dconservepf1}
    \int_{\Omega} \bm{w}^{n+1}(x)dx - \int_{\Omega} \bm{w}^n(x)dx   = \lambda \bm g(\rho, m_1 , m_2)_{ij}.
\end{equation}
Since $G$ is symmetric, by using the same argument as the one for the one-dimensional problem, we have that the scheme \eqref{DGform2d2} preserves both the mass and the momentum , i.e., equations \eqref{conservation:mass2d}-\eqref{conservation:momentumv} hold true.
\end{proof}

We mention here that for the time discretization, both in one- and two-dimension, we use a strong stability preserving scheme, namely, the second-order Runge-Kutta scheme to solve the resulted ODE system $\bm{w}_t = \bm{L}(\bm{w})$:
\begin{equation*}
   \bm{w}^{(1)} = \bm{w}^{n} + \Delta t \bm{L}(\bm{w}^{n}), \;
   \bm{w}^{n+1} = \frac{1}{2}\bm{w}^{n} + \frac{1}{2}\left(\bm{w}^{(1)} + \Delta t \bm{L}(\bm{w}^{(1)})\right).
\end{equation*}

\section{Numerical examples}\label{sec:3}
In this section, we present numerical simulations, both in one- and two-dimensional nonlocal flocking dynamics, by solving the agent-based model~\eqref{eq:Micro} with particle method and the Euler system of equations ~\eqref{Eulern} with the FVM method. We compare the numerical solutions with the same initial conditions.
Our aim is to verify that the two models produce similar results and subsequently (in the next section)  to demonstrate how to infer the fractional order of the Laplacian using the trajectories obtained from the agent-based model.
We first consider the one-dimensional case.

\begin{figure}[b!]
\begin{center}
\begin{minipage}{0.46\linewidth}
\begin{center}
\includegraphics[scale=0.50,angle=0]{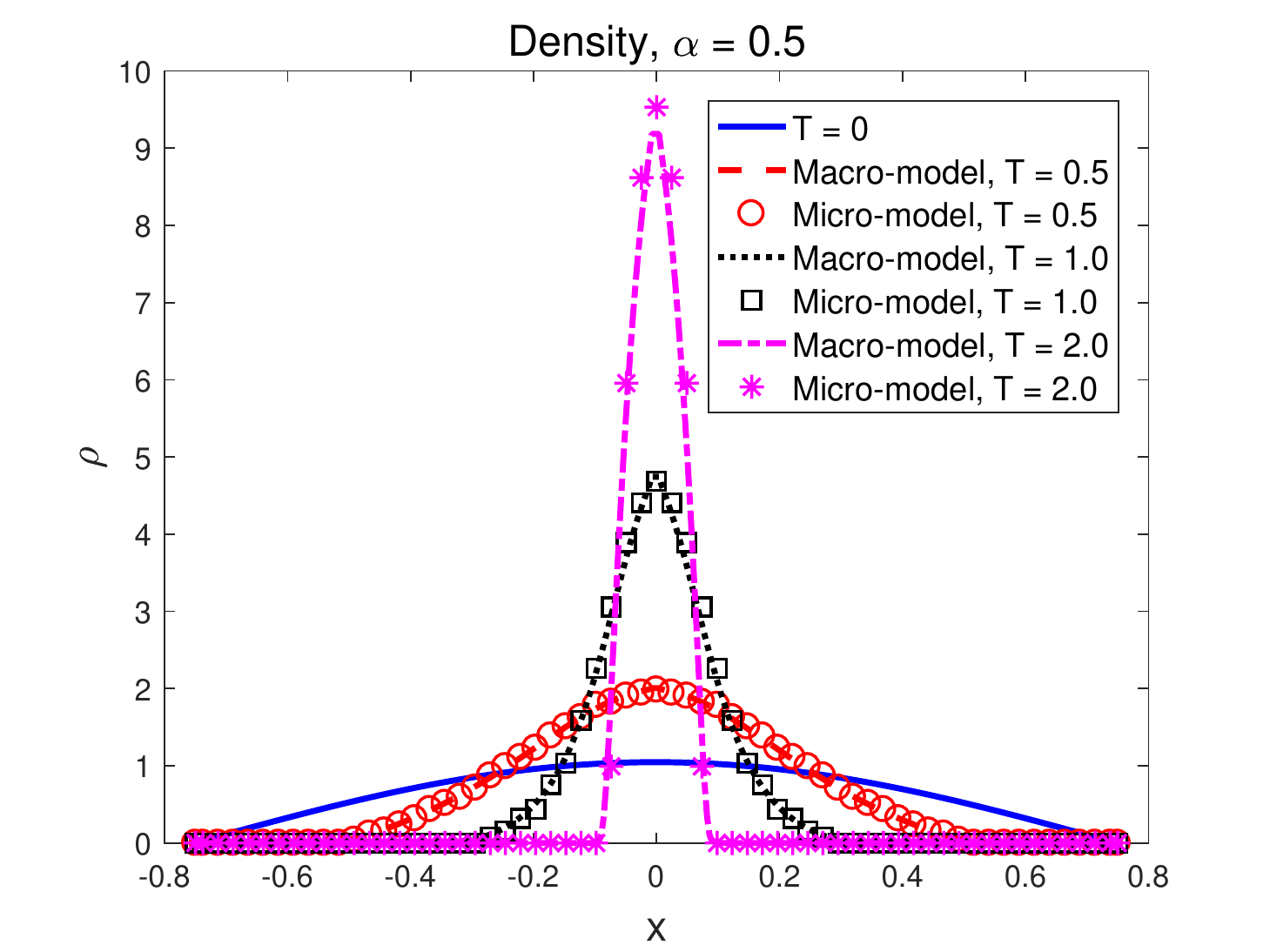}\end{center}
\end{minipage}
\begin{minipage}{0.46\linewidth}
\begin{center}
\includegraphics[scale=0.50,angle=0]{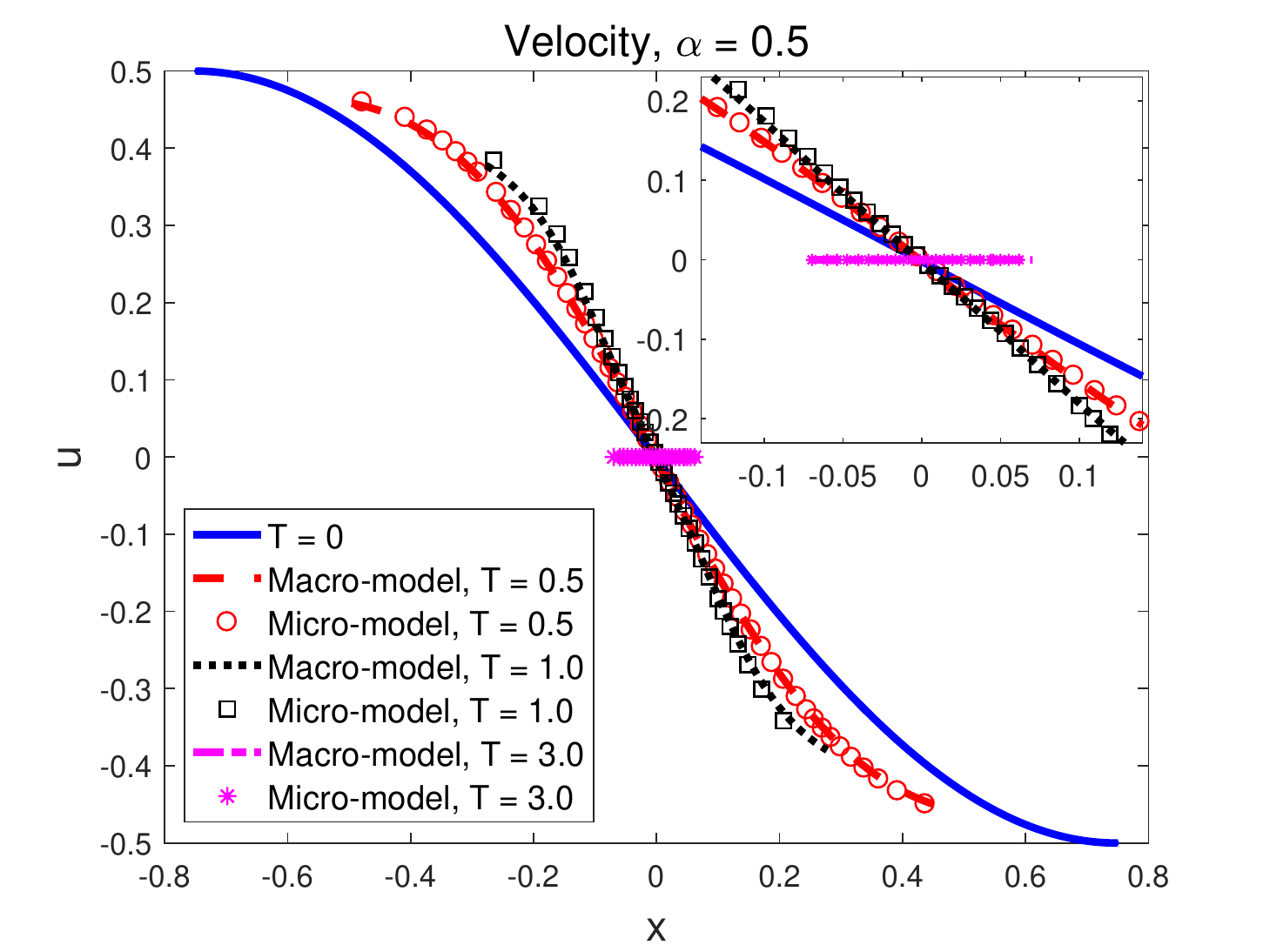}\end{center}
\end{minipage}
\caption{Validation of one-dimensional case for fractional order $\alpha = 0.5$: Numerical solutions of the density and velocity for the Microscale agent-based model \eqref{eq:Micro} (symbols) and the Macroscale Euler equations \eqref{Eulern} (lines) at different times $T$. Left: density, right: velocity. The inset in the right plot is for clarity in the comparison.
}\label{fig:Comparison1Da05}
\end{center}
\end{figure}

\begin{exam}\label{ex:1D:sln}
(\textbf{One-dimensional dynamics})
In this example, we consider the one-dimensional problem with the influence function given by
$$\phi(r) = \frac{c_{1,\alpha}}{r^{1+\alpha}}$$
and the initial density and velocity given by
\begin{equation}\label{Initial1}
    \rho_0(x) = \frac{\pi}{3} \cos\left(\frac{\pi x}{1.5} \right), \quad u_0(x) = -c \sin\left(\frac{\pi x}{1.5} \right),
\end{equation}
where $c = 0.5$ is a constant. Here we use the domain $\Lambda = [-0.75, 0.75]$. The boundary conditions for the density $\rho$ and mass $m = \rho u$ are homogenous Dirichlet boundary conditions.
\end{exam}

For the agent-based model, the total number of sampled particles for the simulation is $N = 1024$, while for the Euler equations  the space step is $\Delta x = 1/256$. The numerical solutions of the density and velocity at different times are shown in Figure \ref{fig:Comparison1Da05}-\ref{fig:Comparison1Da12} for the value of fractional order $\alpha = 0.5$ and $\alpha = 1.2$, respectively. We see that the velocity tends to a constant value, which is in agreement with the analytical result given in~\cite{2017Shvydkoy_a}.
Moreover, we observe from the comparison that the solutions of the microscale agent-based model coincide with solutions of the macroscale Euler equations. This means that the solution of the Euler system of equations can give a good prediction to the solution of the agent-based model.

\begin{figure}[b!]
\begin{center}
\begin{minipage}{0.46\linewidth}
\begin{center}
\includegraphics[scale=0.50,angle=0]{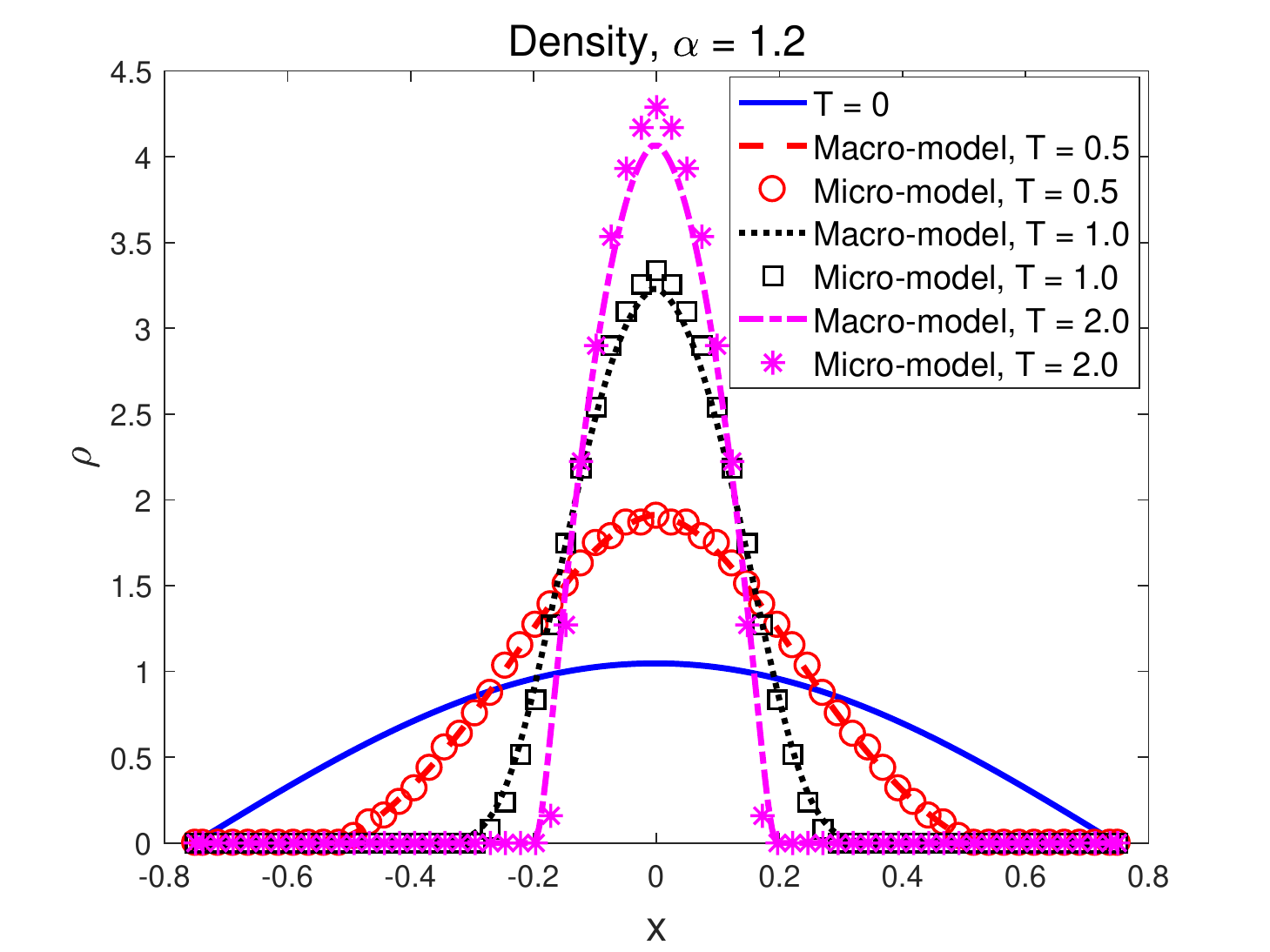}\end{center}
\end{minipage}
\begin{minipage}{0.46\linewidth}
\begin{center}
\includegraphics[scale=0.50,angle=0]{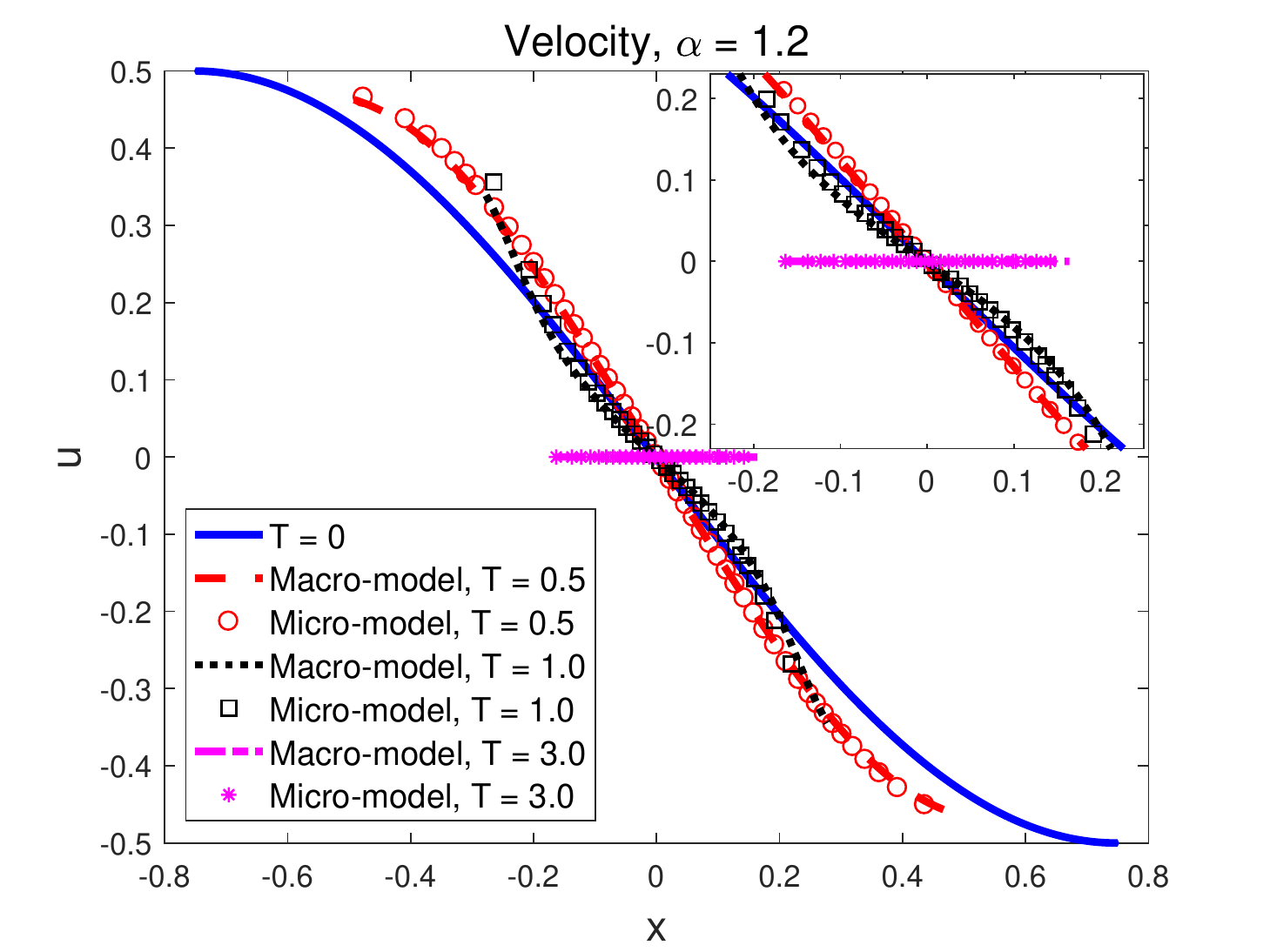}\end{center}
\end{minipage}
\caption{Validation of one-dimensional case for fractional order $\alpha = 1.2$: Numerical solutions of the density and velocity for the Microscale agent-based model \eqref{eq:Micro} (symbols) and the Macroscale Euler equations \eqref{Eulern} (lines) at different times $T$. Left: density, right: velocity. The inset in the right plot is for clarity in the comparison.
}\label{fig:Comparison1Da12}
\end{center}
\end{figure}

\begin{exam}\label{ex:2D:sln}
(\textbf{Two-dimensional dynamics})
We now consider the two-dimensional case, and consider the following influence function
$$\phi(r) = \frac{c_{2,\alpha}}{{r}^{2+\alpha}}.$$
The initial density and velocity are
\begin{equation}\label{Initial2}
\begin{aligned}
    &\rho_0(x,y) = \left(\frac{\pi}{3}\right)^2 \cos\left(\frac{\pi x}{1.5} \right) \cos\left(\frac{\pi y}{1.5} \right), \\
    &u_0(x,y) = -c \sin\left(\frac{\pi x}{1.5} \right), \; v_0(x,y) = -c \sin\left(\frac{\pi y}{1.5} \right),
\end{aligned}
\end{equation}
where $c = 0.5/\sqrt{2}$ is a constant. Here the domain is $\Omega = [-0.75, 0.75]^2$, and the boundary condition for $\rho$ and  $m_1 = \rho u, \, m_2 = \rho v$ are homogenous Dirichlet boundary conditions.
\end{exam}

In this case, the total number of sampled particles for the simulation is $N = 9976$ for the agent-based model, while for the Euler equations  the space step is $\Delta x = \Delta y = 1/64$. The numerical solutions of the density at different times are shown in Figure \ref{fig:Comparisondensity2Da05}-\ref{fig:Comparisondensity2Da12} for the value of fractional order $\alpha = 0.5$ and $\alpha = 1.2$, respectively. The relative differences of the density between the solution of the micro-model and the macro-model are also shown in the third row of each figure.
Again, we observe that solutions of the microscale agent-based model are in good agreement with solutions of the macroscale Euler system of equations.
This means that the numerical solution of the corresponding macro-model can yield correctly density distribution consistent with the collective behavior of the particle system.

\section{Infer the influence function using Gauss process machine learning}\label{sec:4}
The trajectories generated by the particle simulation mimic the field data of tracking logs that can be obtained experimentally, which is used to learn the value of the fractional order of the influence function using a Gaussian process regression model implemented with the Bayesian optimization.
As shown in the previous section, the solutions of the agent-based model~\eqref{eq:Micro} and the solutions of the Euler system of equations  \eqref{Eulern} are in agreement with each other. Thus, assuming that we have the solution of the agent-based model for a given influence function, we then infer the influence function by using the Euler system of equations  with a machine learning algorithm.
In this section, we show how to infer the influence function by using the \emph{Gaussian Process Machine Learning} (GPML) with the Bayesian optimization. This method has been used to discover the fractional order of fractional advection-dispersion equations in \cite{2017Pang}.
In particular, we are going to learn the influence function by solving the Euler equations  \eqref{Eulern}, while the data is obtained by solving the agent-based model \eqref{eq:Micro}. This is to say, we are going to use the agent-based model with a specified influence function to generate the data which is considered as the experimental data, then we use the numerical solution of the Euler equations  to infer the influence function by using the GPML approach.

\begin{figure}[t!]
    \centering
    \begin{subfigure}[b]{0.29\textwidth}
        \includegraphics[width=\textwidth]{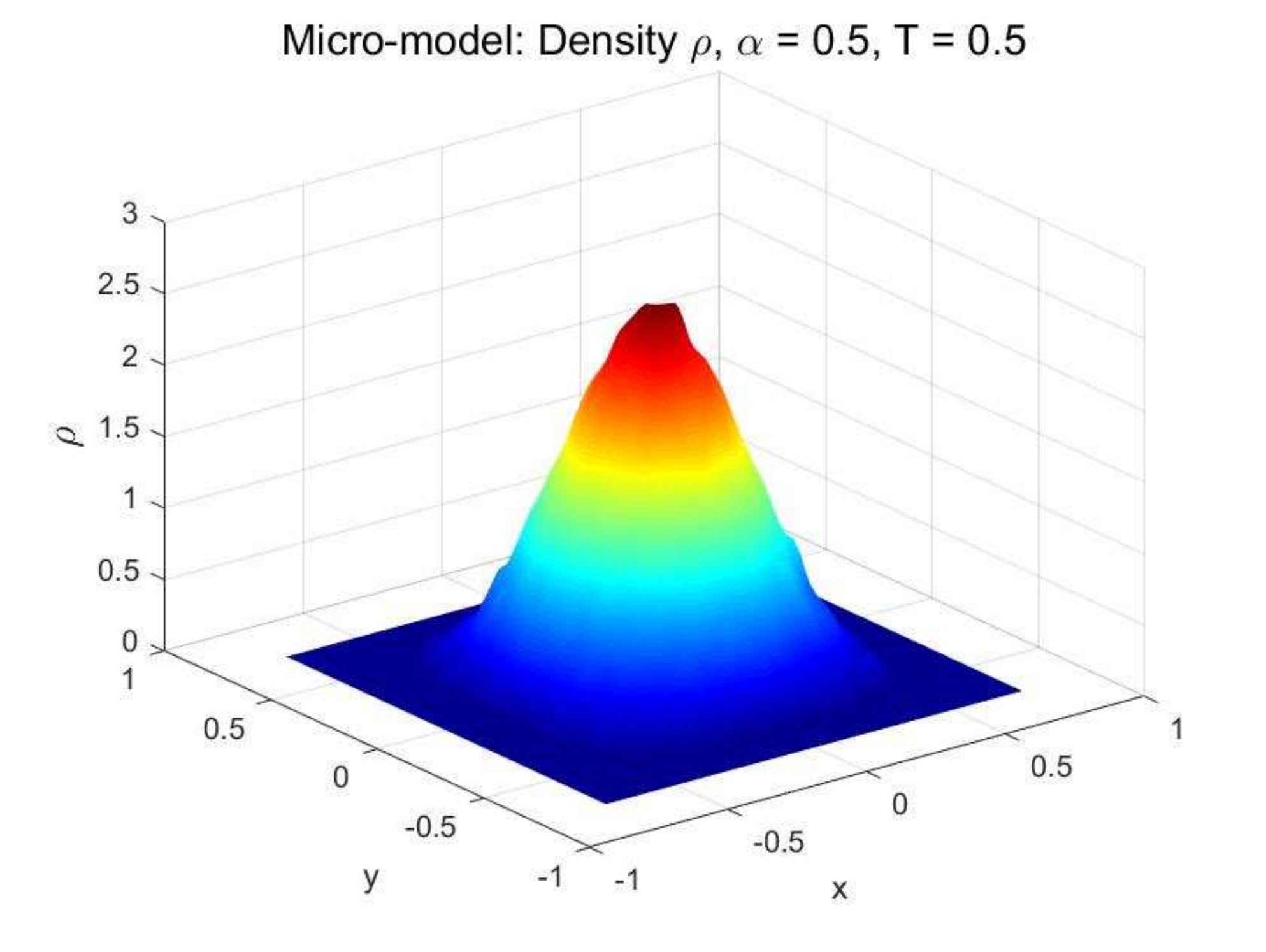}
        \includegraphics[width=\textwidth]{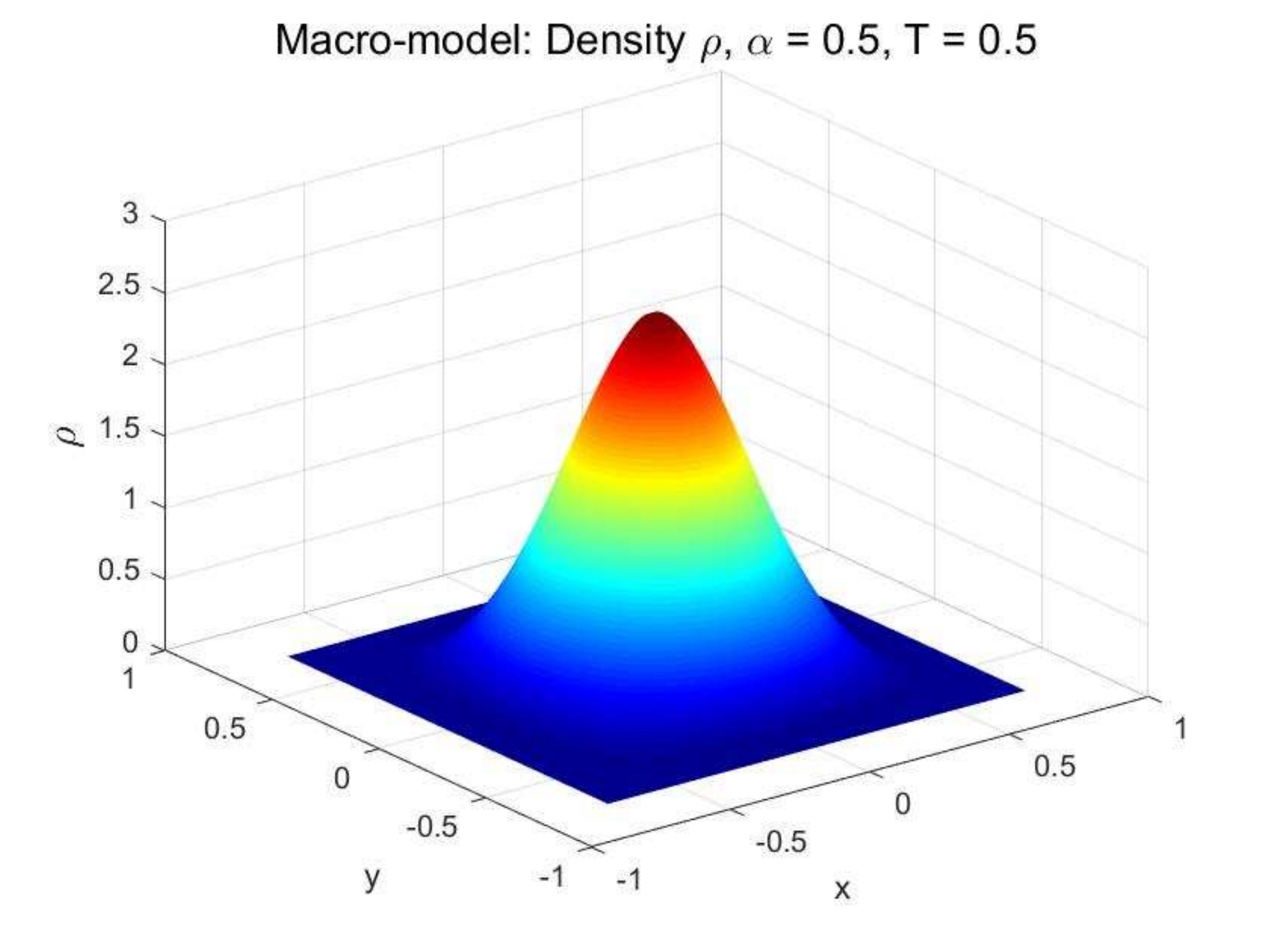}
        \includegraphics[width=\textwidth]{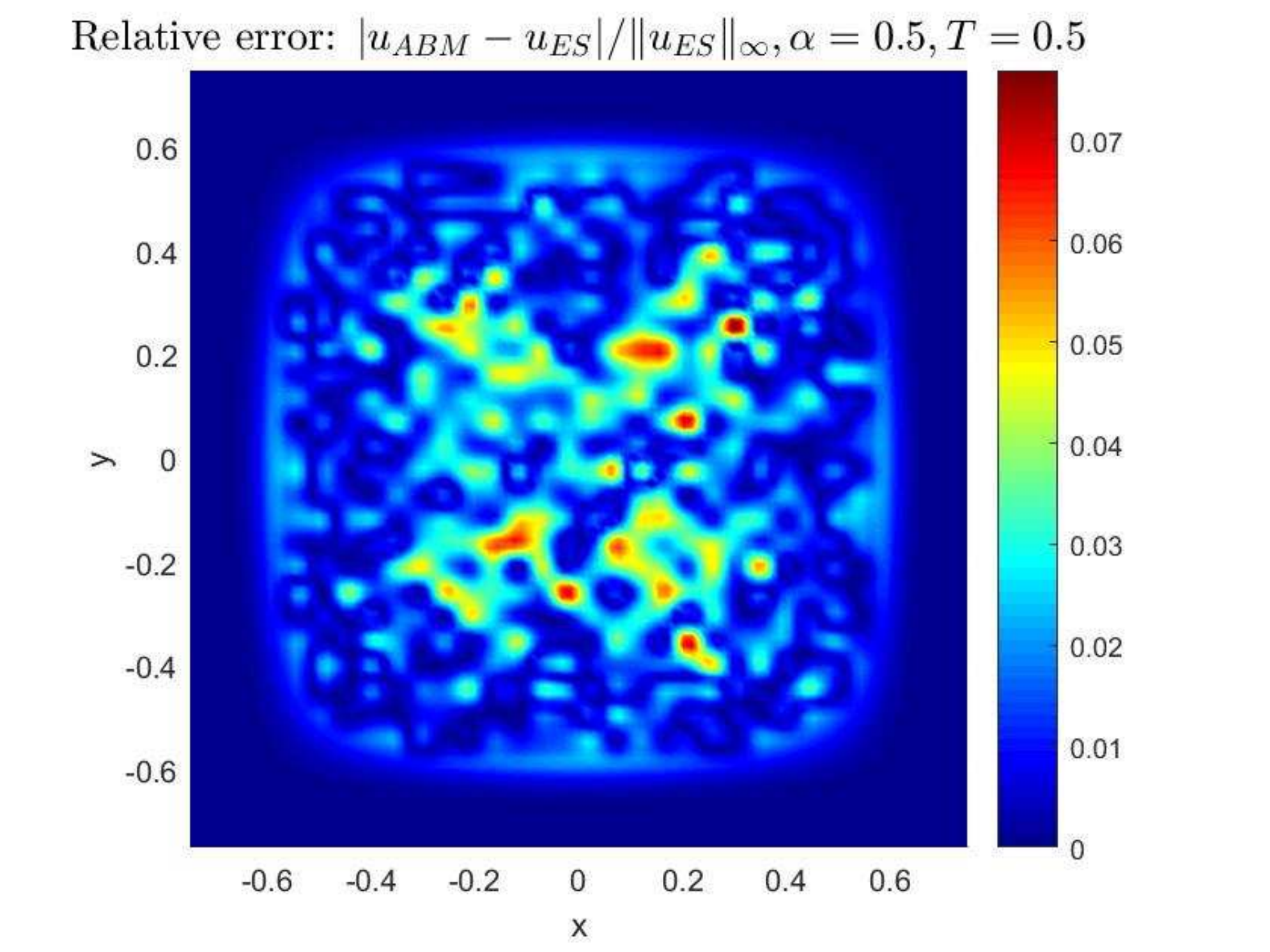}
        \caption{\scriptsize $T = 0.5$}
    \end{subfigure}
    \begin{subfigure}[b]{0.29\textwidth}
        \includegraphics[width=\textwidth]{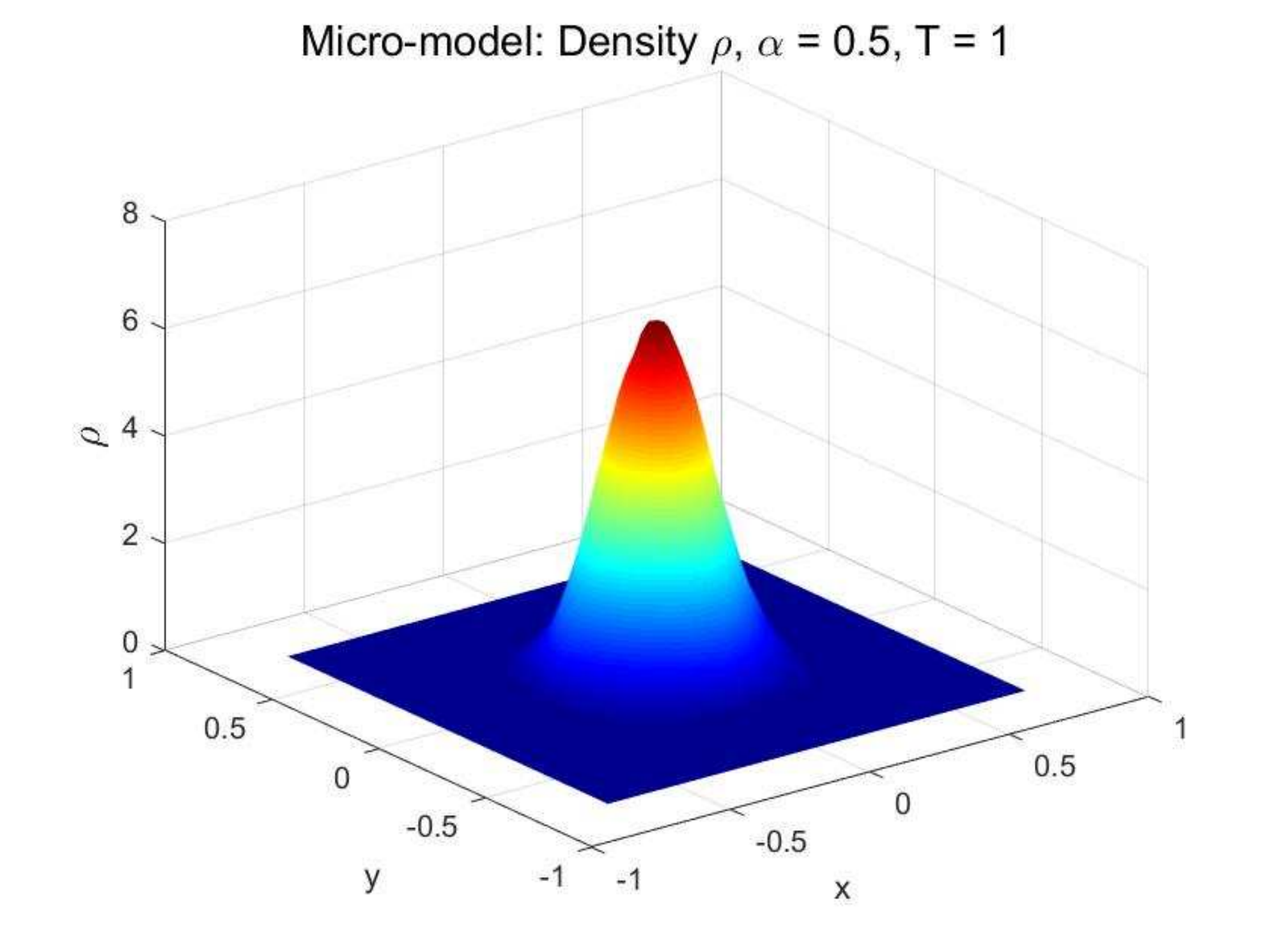}
        \includegraphics[width=\textwidth]{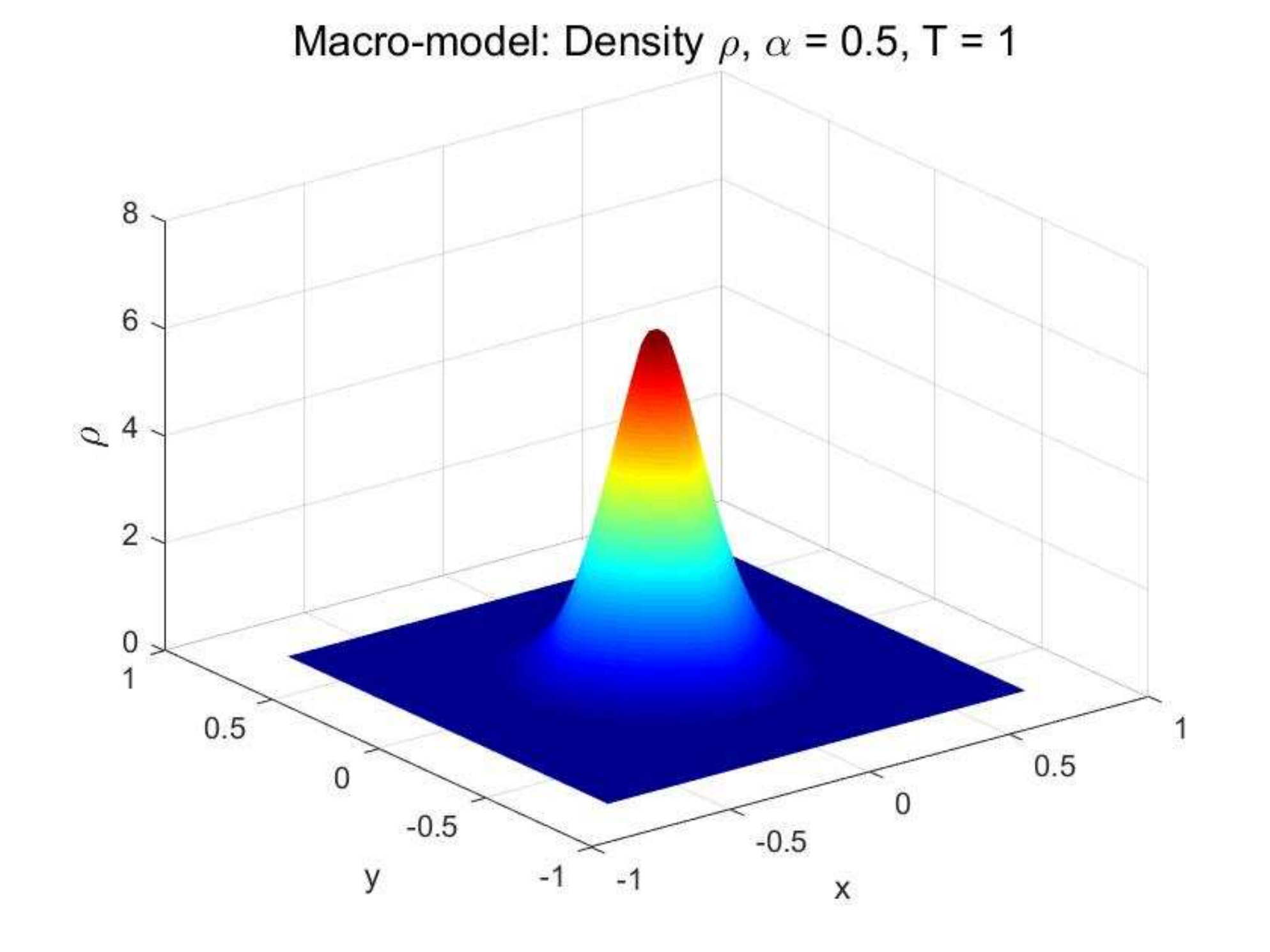}
        \includegraphics[width=\textwidth]{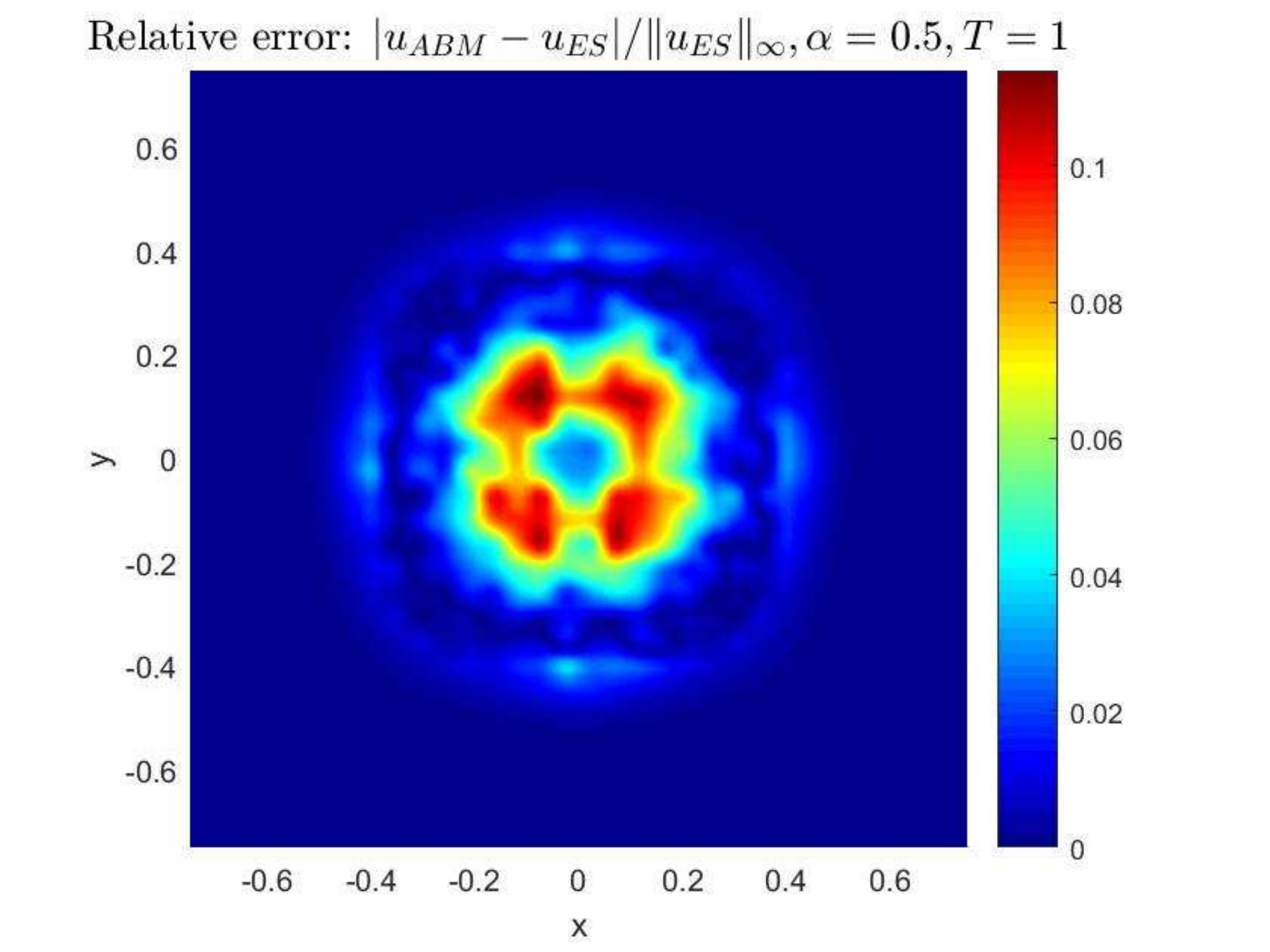}
        \caption{\scriptsize $T = 1.0$}
    \end{subfigure}
    \begin{subfigure}[b]{0.29\textwidth}
        \includegraphics[width=\textwidth]{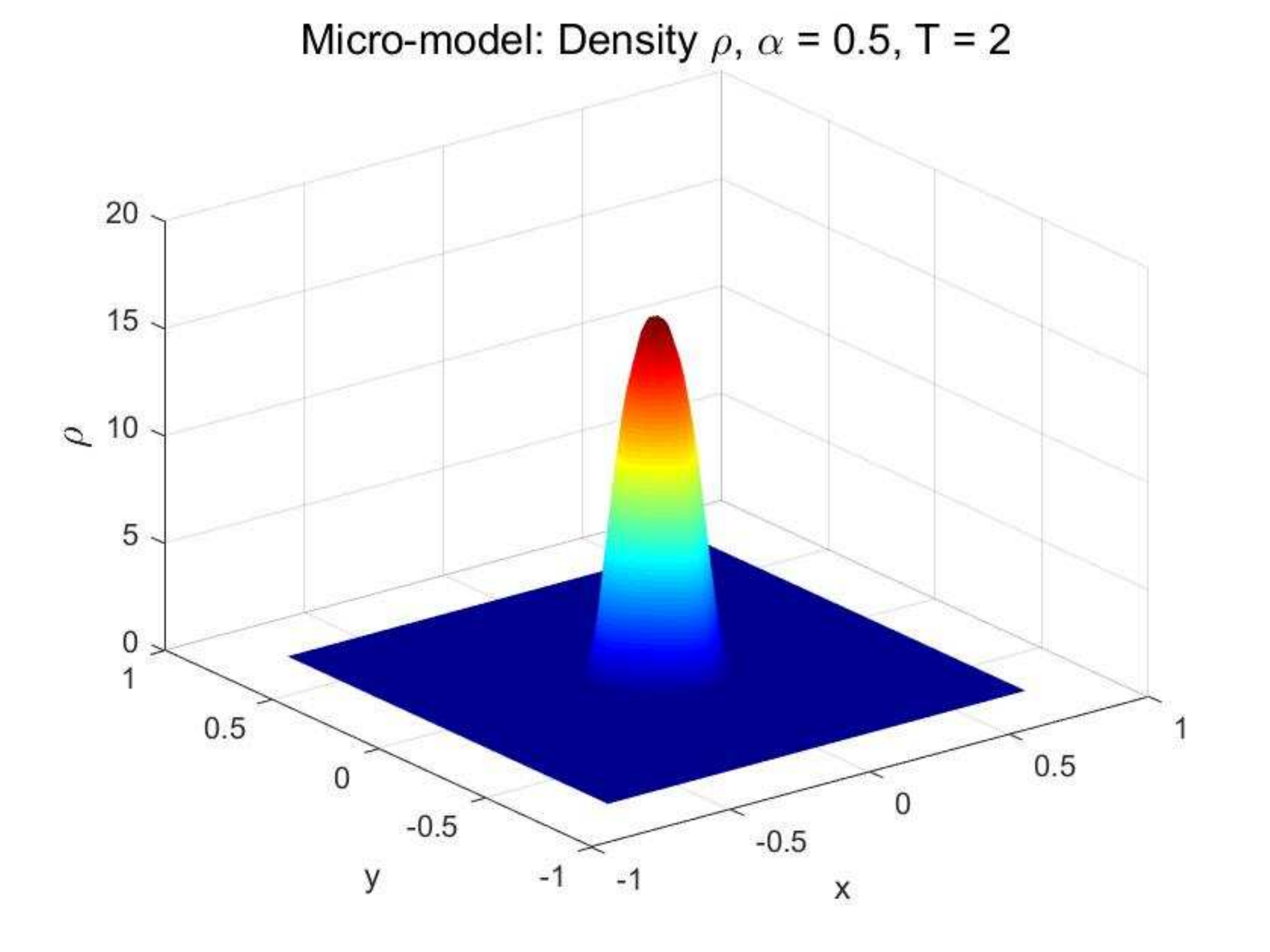}
        \includegraphics[width=\textwidth]{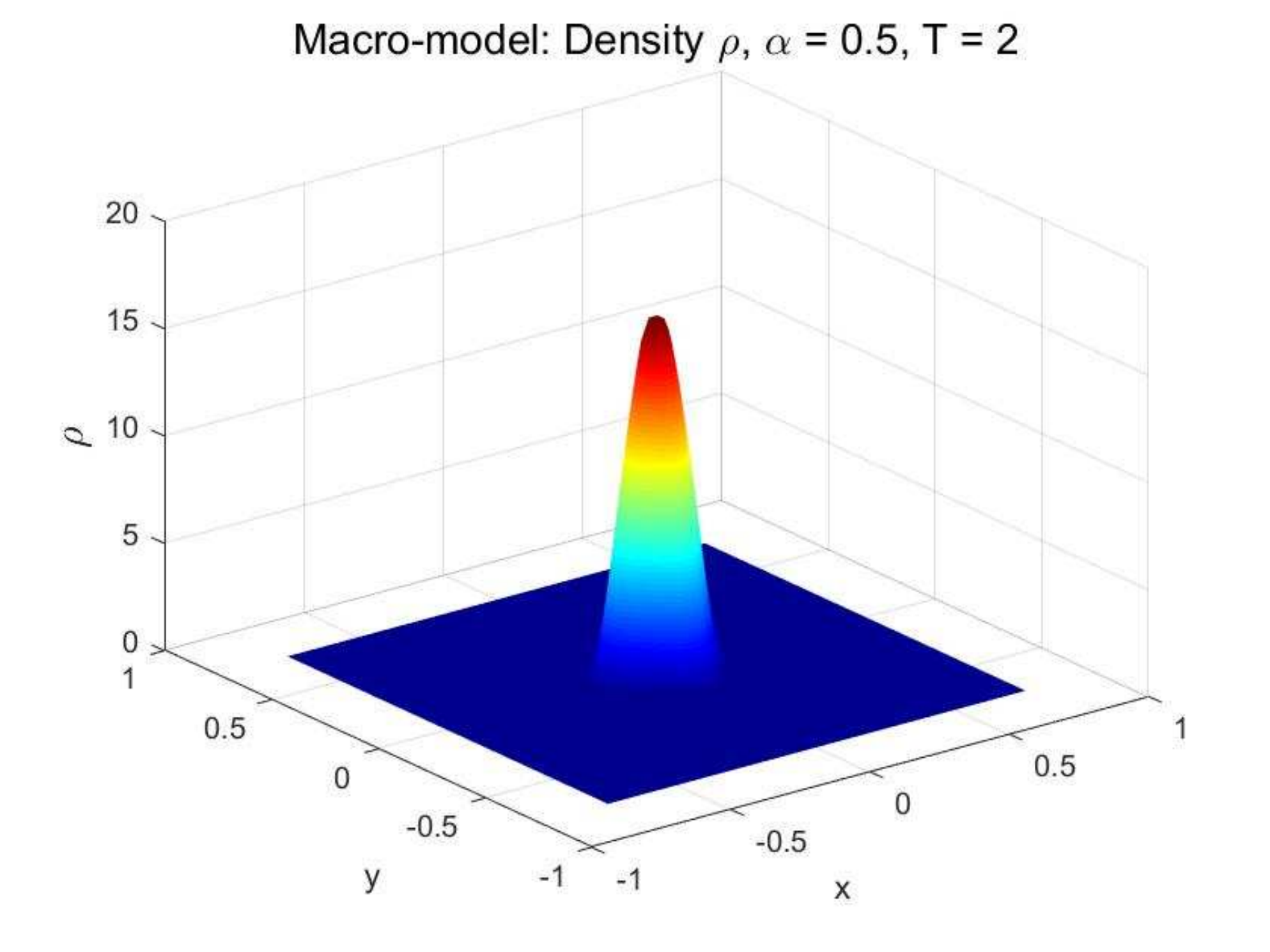}
        \includegraphics[width=\textwidth]{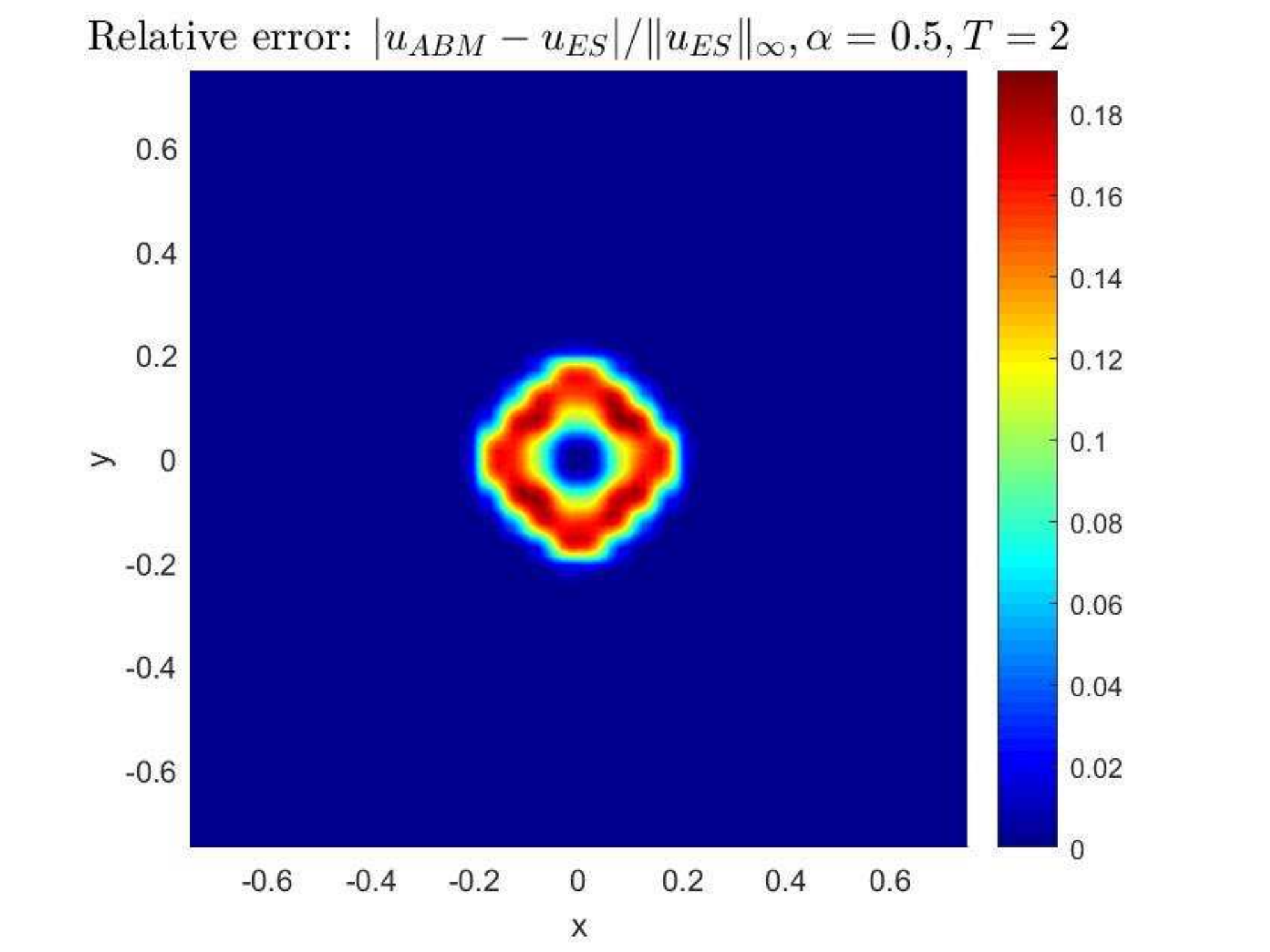}
        \caption{\scriptsize $T = 2.0$}
    \end{subfigure}
    \caption{Validation of two-dimensional case for the fractional order $\alpha = 0.5$: Numerical solutions of the density for the Microscale agent-based model \eqref{eq:Micro} (upper row) and the Macroscale Euler equations \eqref{Eulern} (middle row) at different times $T$.
    Lower row: relative error.}
\label{fig:Comparisondensity2Da05}
\end{figure}

\begin{figure}[t!]
    \centering
    \begin{subfigure}[b]{0.29\textwidth}
        \includegraphics[width=\textwidth]{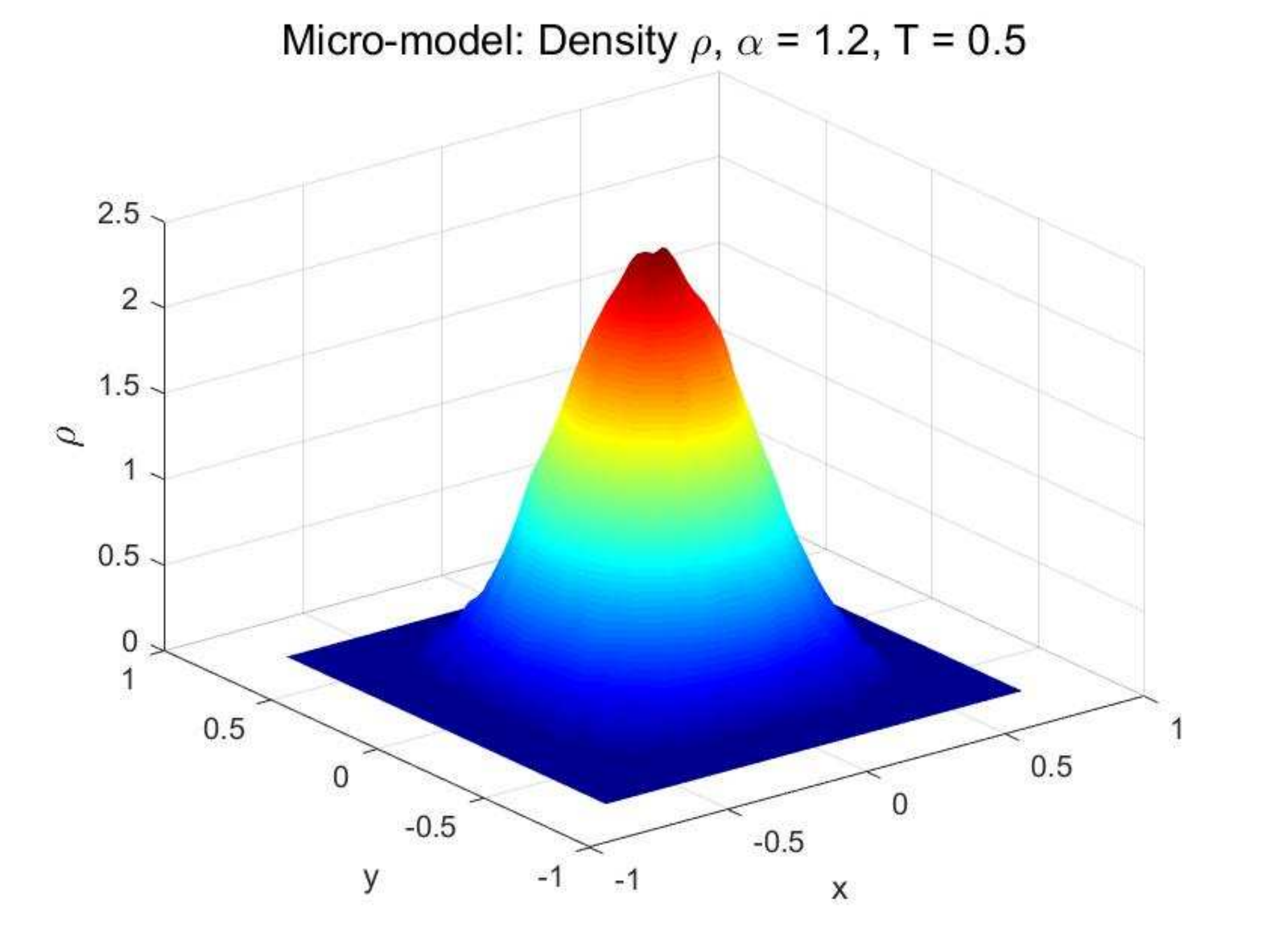}
        \includegraphics[width=\textwidth]{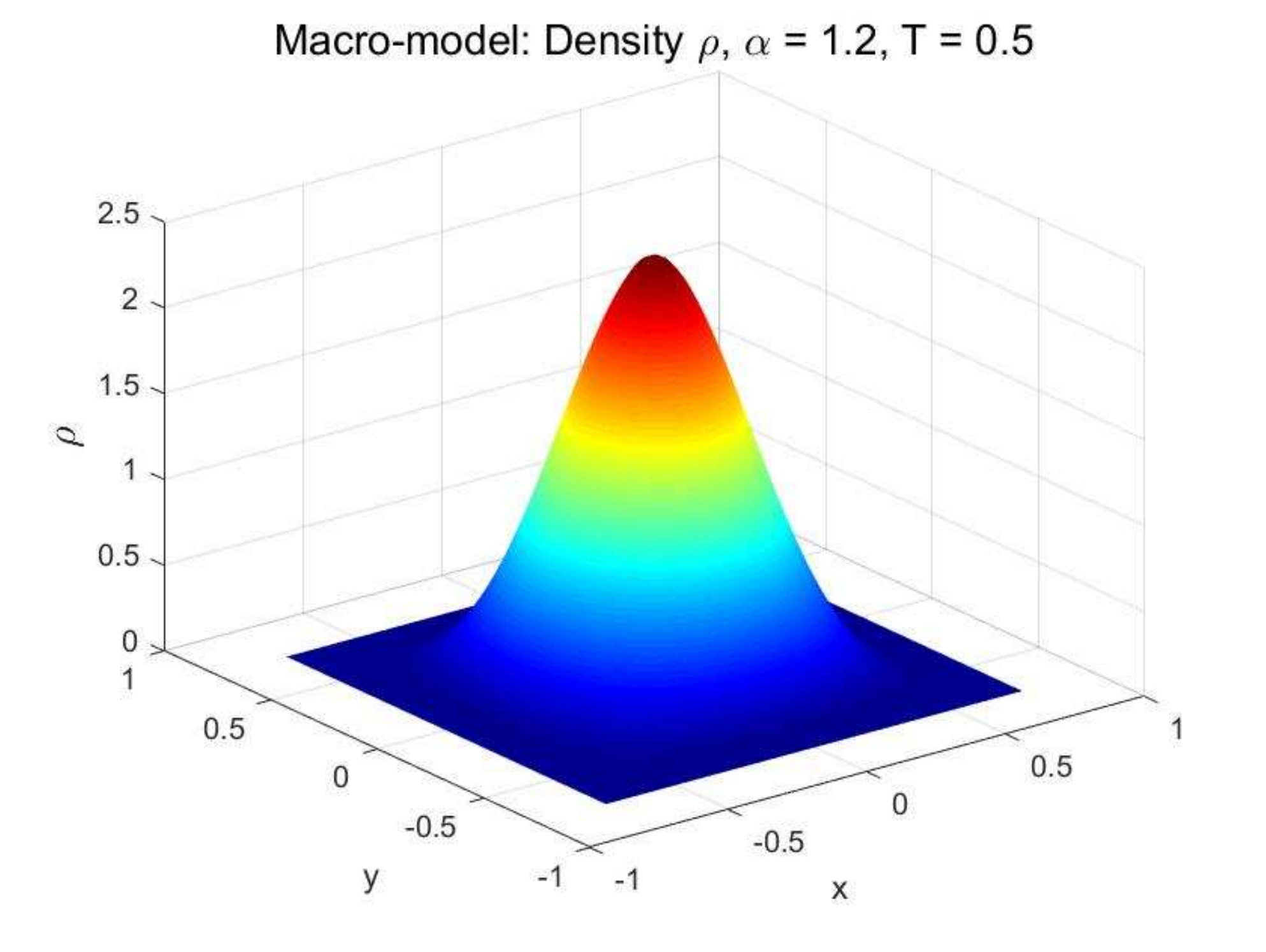}
        \includegraphics[width=\textwidth]{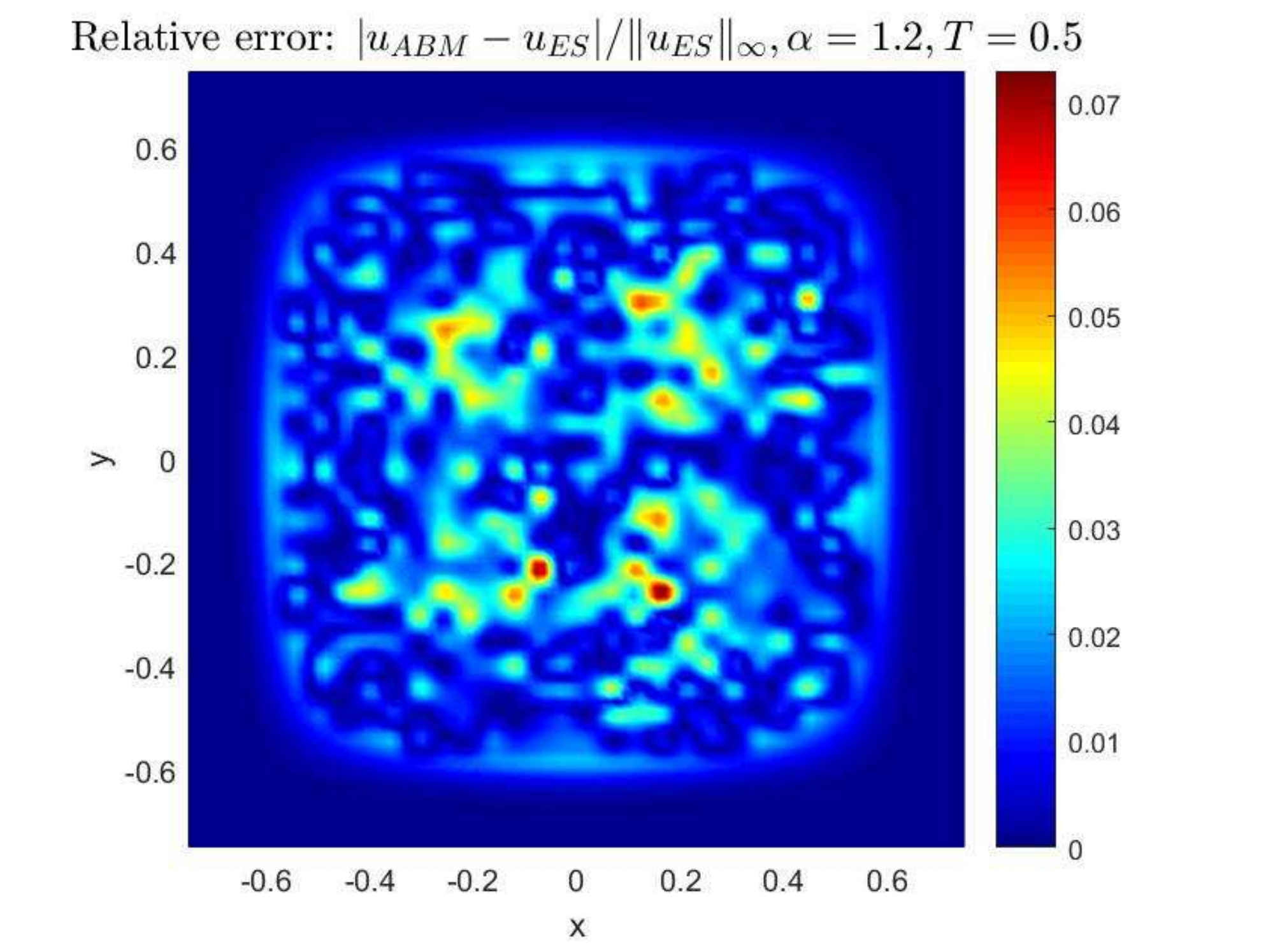}
        \caption{\scriptsize $T = 0.5$}
    \end{subfigure}
    \begin{subfigure}[b]{0.29\textwidth}
        \includegraphics[width=\textwidth]{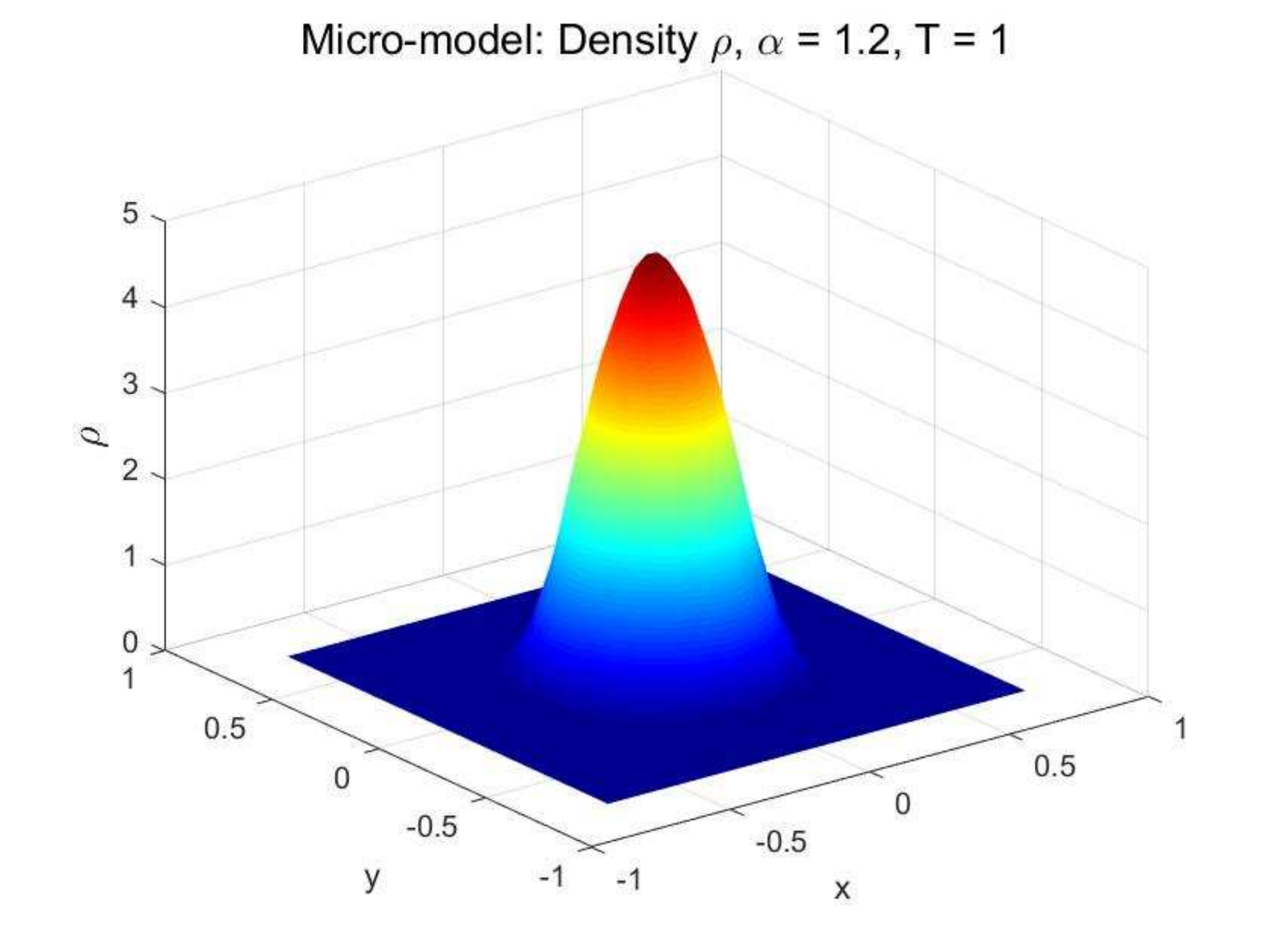}
        \includegraphics[width=\textwidth]{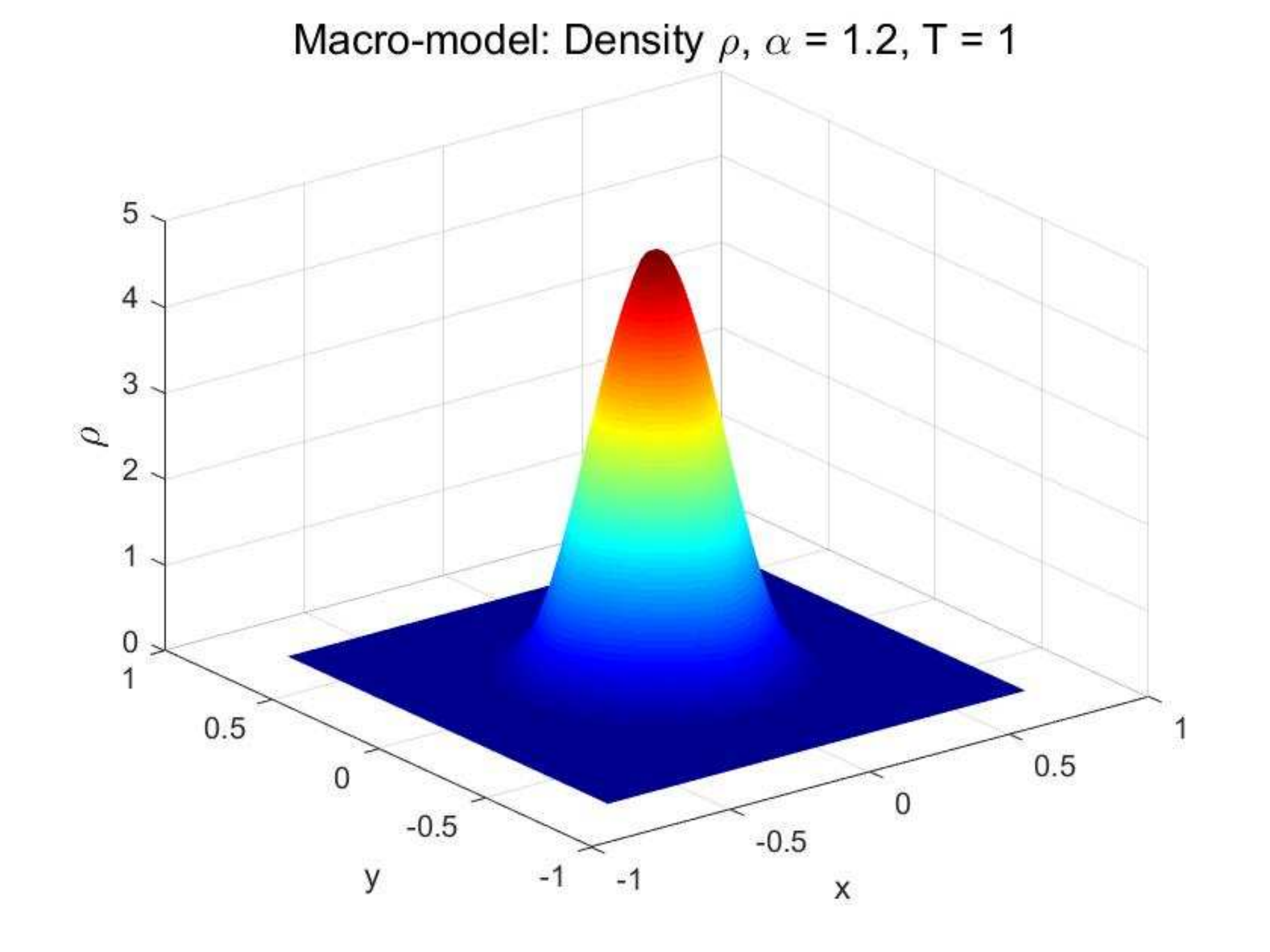}
        \includegraphics[width=\textwidth]{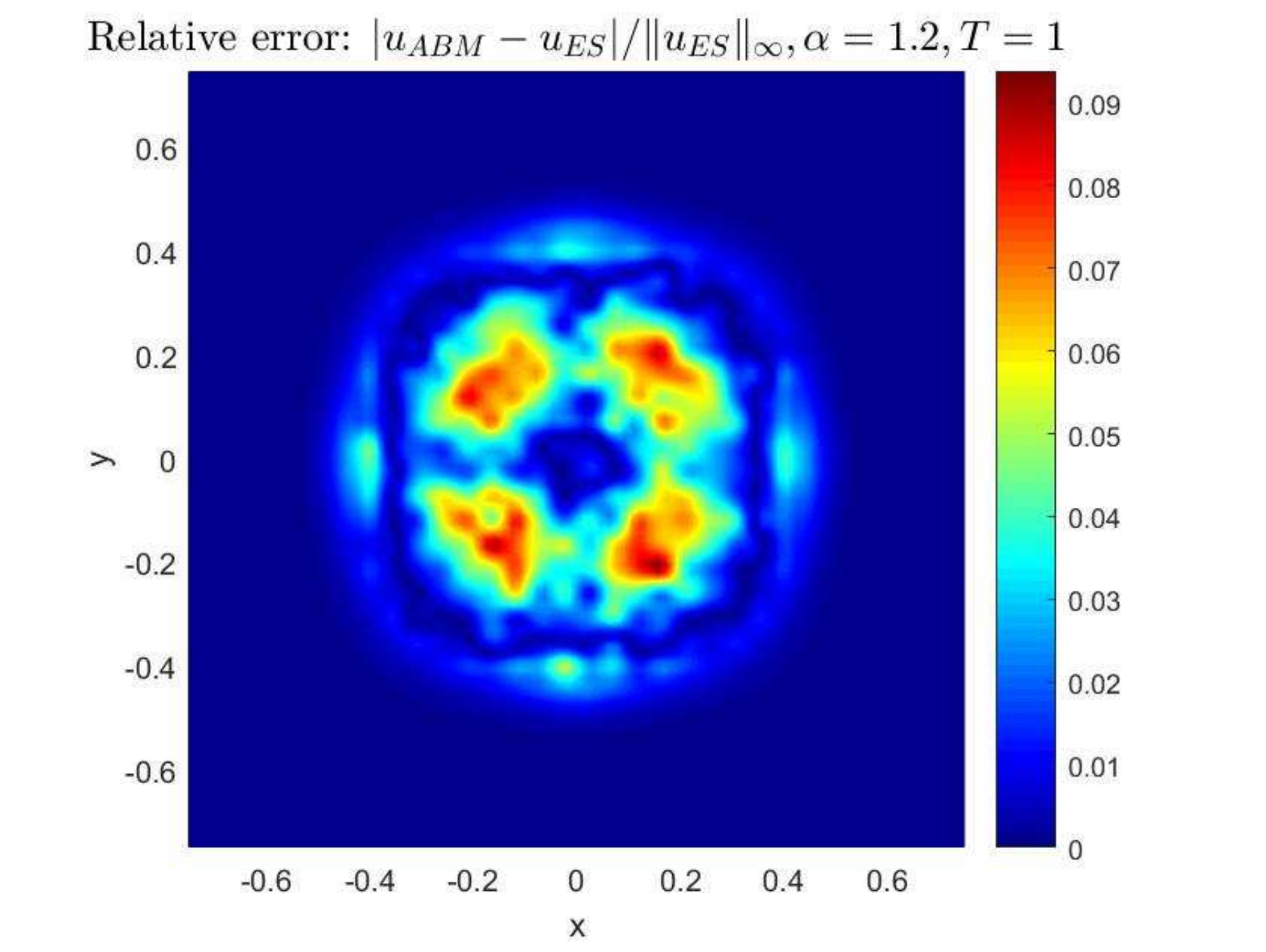}
        \caption{\scriptsize $T = 1.0$}
    \end{subfigure}
    \begin{subfigure}[b]{0.29\textwidth}
        \includegraphics[width=\textwidth]{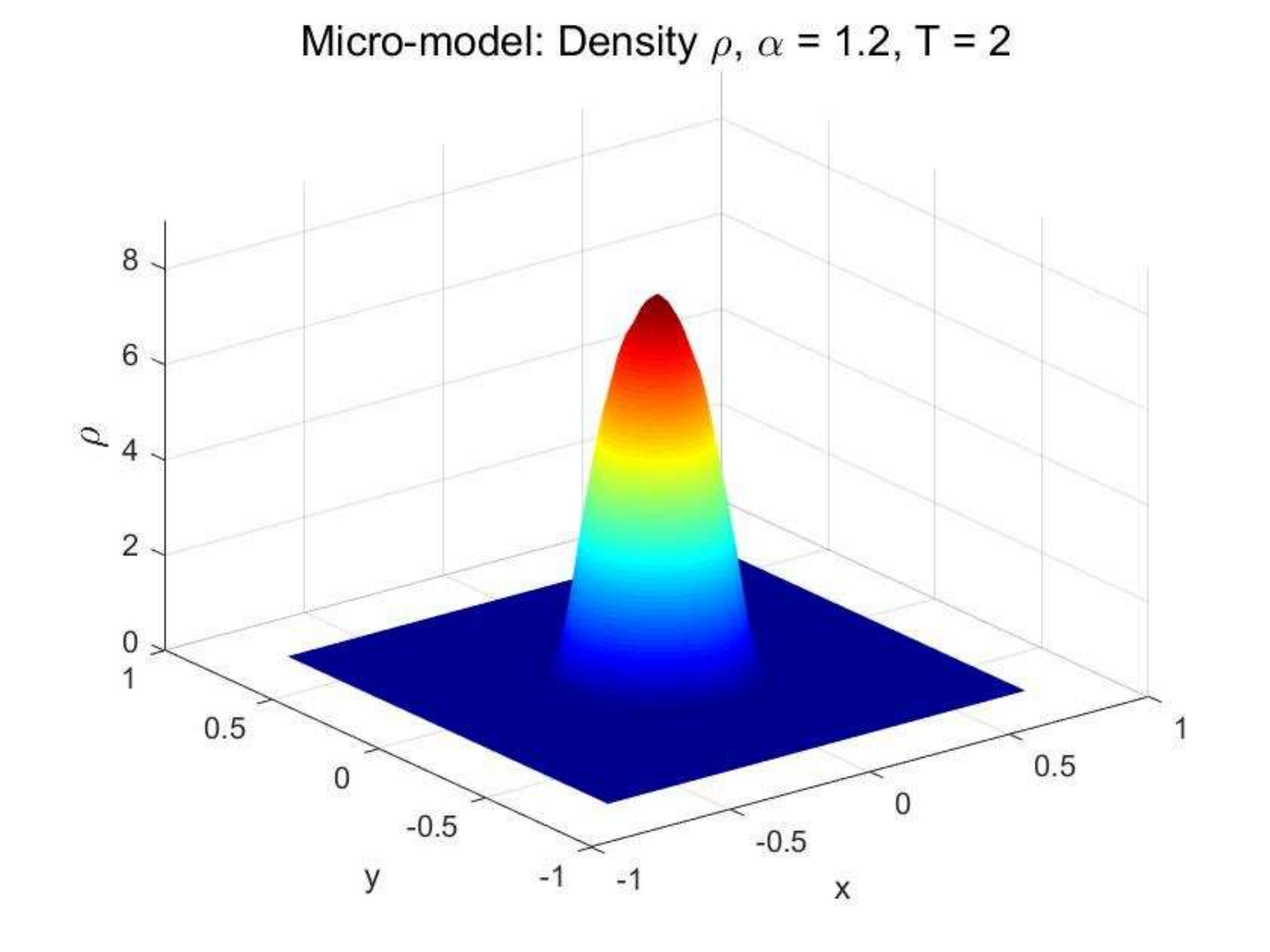}
        \includegraphics[width=\textwidth]{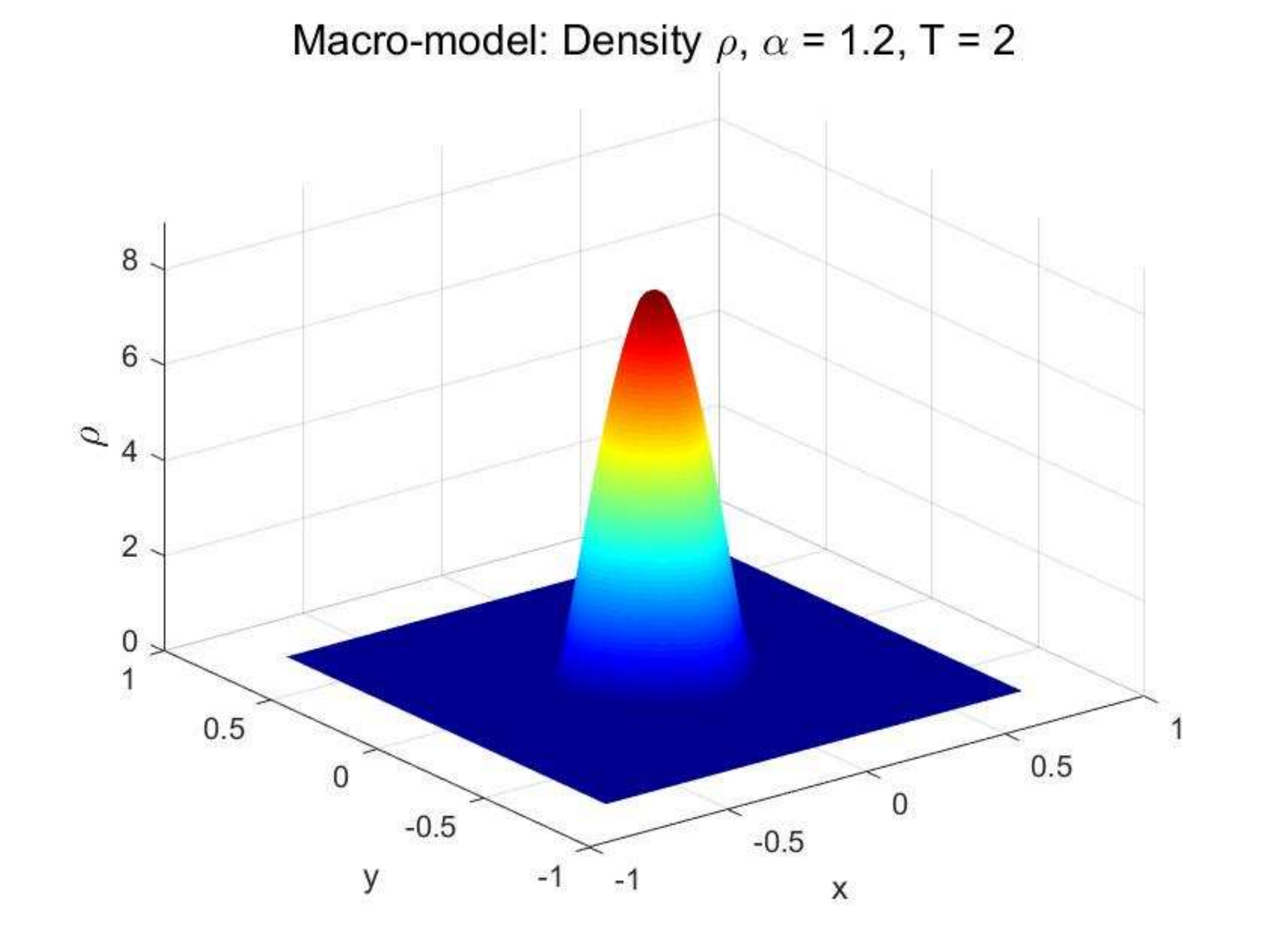}
        \includegraphics[width=\textwidth]{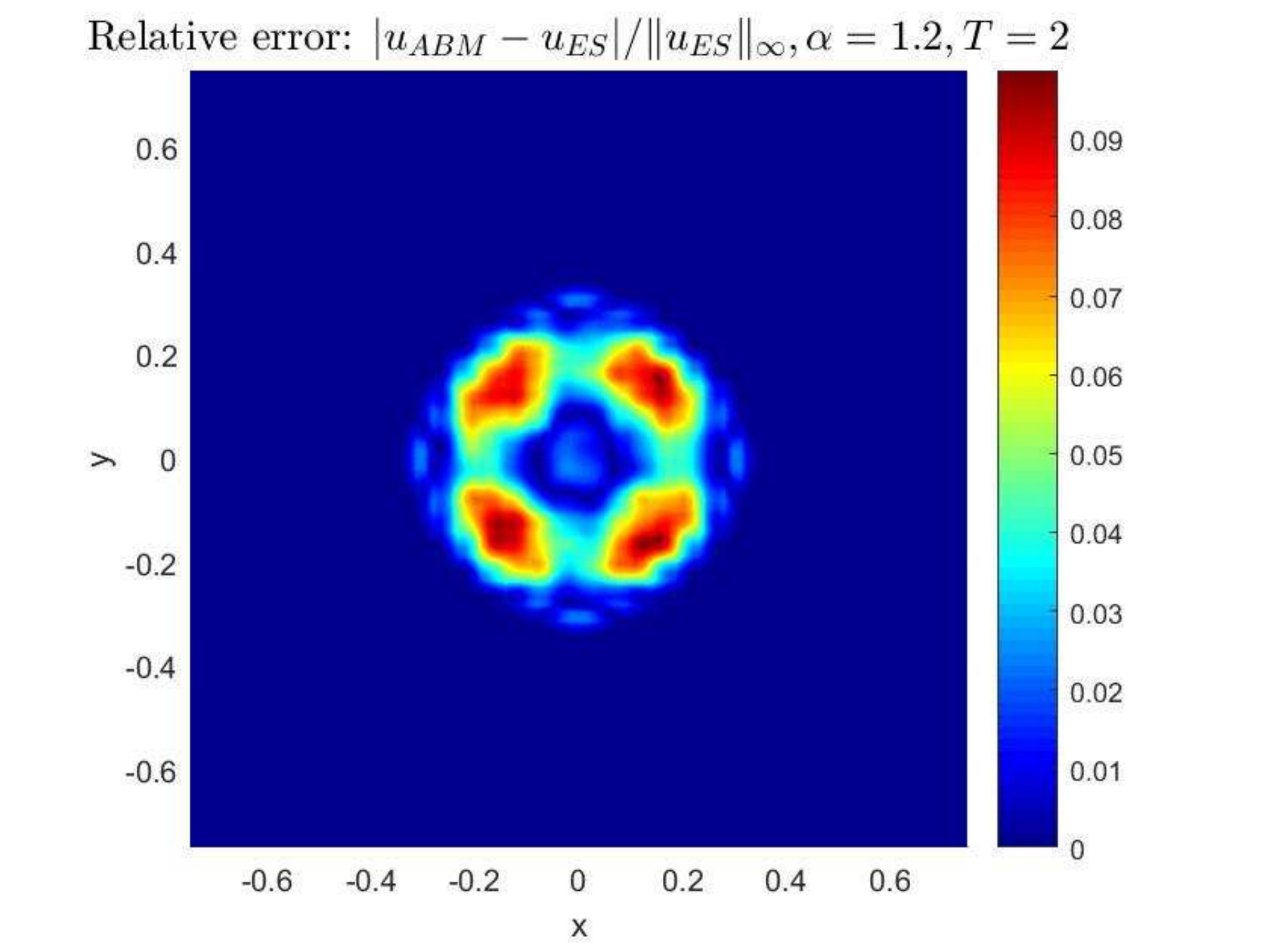}
        \caption{\scriptsize $T = 2.0$}
    \end{subfigure}
    \caption{Validation of two-dimensional case for the fractional order $\alpha = 1.2$: Numerical solutions of the density for the Microscale agent-based model \eqref{eq:Micro} (upper row) and the Macroscale Euler equations \eqref{Eulern} (middle row) at different times $T$.
    Lower row: relative error.}
\label{fig:Comparisondensity2Da12}
\end{figure}

\subsection{Gaussian process regression}
Assuming that we have an influence function with fractional order $\alpha$. We would like to infer the value of the fractional order $\alpha$. Suppose that we have the following data generated by the particle simulation:
\begin{equation*}
    Data = [Data_1,Data_2,\ldots, Data_N].
\end{equation*}
We then define the input-output pairs function for the Gaussian process as:
\begin{equation}\label{yfun}
     F(\alpha) = \frac{\|Num(\alpha) - Data\|}{\|Data\|},
\end{equation}
where the norm is in the $L_2$ sense, $Num(\alpha)$ is the numerical solution of the Euler equations  and $F(\alpha)$ is the scalar output corresponding to the fractional order $\alpha$.

In Gaussian process regression (GPR), we assume that $F(\alpha)$ is a Gaussian process:
$$F(\alpha) \sim GP(m(\alpha), k(\alpha,\alpha')),$$
where $F(\alpha)$ is a Gaussian random variable, $m(\cdot)$ and $k(\cdot, \cdot)$ are the mean and covariance functions, respectively.
GPR constructs the response surface based on the known input-output pairs, i.e., the training data. Assume that we have $N$ training data points given by
\begin{equation*}
    {\bm D} = \{(\alpha_1,F(\alpha_1)),(\alpha_2,F(\alpha_2)), \ldots, (\alpha_N,F(\alpha_N)) \}.
\end{equation*}
Generally, the output would contain noise, namely, the output would be
$$F(\alpha_i) + \epsilon_i, \quad i = 1,2,\ldots, N, $$
where the noise $\epsilon_i$ is considered to be Gaussian white noise, i.e., $\epsilon_N \sim \mathcal{N}(\mathbf{0},\sigma_n^2 \mathbf{I})$.

The goal of the GPR is to predict the output $F$ at arbitrary test input $\alpha^*$. This can be obtained by using the conditional distribution property for a multi-variate Gaussian random vector.
The joint distribution of the test output $F(\alpha^*)$ and the noisy training data can be written as $(N+1)$-variate Gaussian random vector
\begin{equation*}
\left(
  \begin{array}{c}
    F(\alpha^*) \\
    F(\alpha_1) \\
    \vdots \\
    F(\alpha_N) \\
  \end{array}
\right)
 \sim \mathcal{N}
 \left(
 \left[
 \begin{array}{c}
    m(\alpha^*) \\
    m(\alpha_1) \\
    \vdots \\
    m(\alpha_N) \\
  \end{array}
 \right]
 ,
\left[
  \begin{array}{cccc}
    k(\alpha^*,\alpha^*)&  k(\alpha^*,\alpha_1)               &   \cdots     &   k(\alpha^*,\alpha_N)    \\
     k(\alpha_1,\alpha^*)     &  k(\alpha_1,\alpha_1)+\sigma_n^2    &   \cdots        &  k(\alpha_1,\alpha_N) \\
    \vdots    & \vdots      & \ddots &    \vdots \\
    k(\alpha_N,\alpha^*) & k(\alpha_N,\alpha_1) & \cdots &  k(\alpha_N,\alpha_N)+\sigma_n^2 \\
  \end{array}
\right]
\right)
\end{equation*}
or in the matrix form
\begin{equation*}
\left(
  \begin{array}{c}
    F(\alpha^*) \\
    \mathbf{F}_N\\
  \end{array}
\right)
 \sim \mathcal{N}
 \left(
 \left[
 \begin{array}{c}
    m(\alpha^*) \\
    \mathbf{m}_N\\
  \end{array}
 \right]
 ,
\left[
  \begin{array}{cc}
    k(\alpha^*,\alpha^*)    &  \mathbf{k}^T  \\
    \mathbf{k}    &  \mathbf{K} + \sigma_n^2 \mathbf{I}\\
  \end{array}
\right]
\right).
\end{equation*}
The conditional distribution of $F(\alpha^*)$ given $\mathbf{F}_N$ is also a Gaussian distribution whose mean and covariance function are
\begin{equation*}
    \begin{aligned}
      &m_*(\alpha^*) = m(\alpha^*) + \mathbf{k}^T(\mathbf{K} + \sigma_n^2 \mathbf{I}) ^{-1}(\mathbf{F}_N - \mathbf{m}_N),\\
      &\sigma_*(\alpha^*) = k(\alpha^*,\alpha^*) - \mathbf{k}^T(\mathbf{K} + \sigma_n^2 \mathbf{I}) ^{-1} \mathbf{k}.
    \end{aligned}
\end{equation*}

The procedure of GPML mainly includes \emph{training, prediction} and \emph{Bayesian optimization}.
The training and prediction can be done with the GPR using the GPML Toolbox~\cite{2010rasmussen}. Next we introduce the Bayesian optimization.

\subsection{Bayesian optimization}
The goal of Bayesian optimization in machine learning is to find the next input. Assume that we have $N$ training data $\alpha_1,\alpha_2,\ldots, \alpha_N$. To obtain the next input data $\alpha_{N+1}$, we need to optimize the acquisition function, i.e.,
\begin{equation}\label{maxacq}
    \alpha_{N+1} = \arg\max_{\alpha \in \Omega} \{EI(\alpha)\},
\end{equation}
where $EI(\cdot)$ is the expected improvement acquisition function defined by
\begin{equation*}\label{acq}
    EI(\alpha) = [\min(\mathbf{F}_N) - m_{*}(\alpha)]\Phi\left(\frac{\min(\mathbf{F}_N) - m_{*}(\alpha)}{\sigma_{*}(\alpha)} \right) + \sigma_{*}(\alpha)\phi\left(\frac{\min(\mathbf{F}_N) - m_{*}(\alpha)}{\sigma_{*}(\alpha)} \right),
\end{equation*}
here $\Phi(\cdot)$ and $\phi(\cdot)$ are the standard normal cumulative distribution and density functions respectively, $m_{*}(\cdot)$ and $\sigma_{*}(\cdot)$ are the predicted mean and standard deviation, respectively; $\mathbf{F}_N$ is the vector consisting of the $N$ training outputs.
The optimization problem \eqref{maxacq} is solved by using the modified Lipschitzian optimization algorithm~\cite{1993Jones}.
Once we obtain the new training input $\alpha_{N+1}$ with the Bayesian optimization, we then solve the Euler system of equations  to get the numerical density and velocity, and then obtain the output $F(\alpha_{N+1})$ by the equation \eqref{yfun}.
Thus, a new training data set $(\alpha_{N+1},F(\alpha_{N+1}))$ is obtained and the next iteration can start until the terminal condition is satisfied.

\subsection{Numerical examples}
We now consider two numerical examples to illustrate the GPML algorithm.
In the following numerical examples, the covariance function $k(\bm {x}, \bm{x}')$ is taken as the frequently used Matern $5/2$-order automatic relevance determination function~\cite{2004rasmussen}
$$k(\bm {x}, \bm{x}') = \sigma^2\left(1 + h + \frac{h^2}{3}\right)e^{-h},$$
where
$$h = \sqrt{\left(\sum_{i = 1}^d \frac{5(x_i - x_i')^2}{\theta_i^2}\right)}$$
with $d$ the dimension of the input vector $\bm{x}$ and $\theta_i$ the $i$-th hyper-parameter.

\subsubsection{One-dimensional case}
\begin{exam}\label{ex:1D:ML}
We first consider the one-dimensional problem with the same influence function and initial and boundary conditions as that for Example \ref{ex:1D:sln}.
\end{exam}

We first obtain the data by solving the agent-based model \eqref{eq:Micro} with the particle method for a given $\hat{\alpha}$. We then get the positions and the corresponding velocities $v_i^{\hat{\alpha},P}(t), i = 1,\ldots,N$ of particles $x_i^{\hat{\alpha}}(t), i = 1,\ldots,N$ at time $t = k \Delta T$, where $\Delta T = 0.1, k = 5,6,\ldots, 20$, here we use $N = 1024$ as used in Example \ref{ex:1D:sln}.
Therefore, given an input data $\alpha$, we obtain the output data as follows: Firstly, solving the Euler equations  \eqref{Eulern} with the finite volume method to obtain the numerical velocity $v_i^{\alpha,Num}(t)$ at the given points $x_i^{\hat{\alpha}}(t), i = 1,\ldots,N$, then  the output is given by
\begin{equation*}
    F(\alpha) = \frac{\|{\bm v}_{\alpha,Num} -{\bm v}_{\hat{\alpha},P}\|}{\|{\bm v}_{\hat{\alpha},P}\|},
\end{equation*}
where
$$ {\bm v}_{\hat{\alpha},P}: = \left(v_1^{\hat{\alpha},P}(t),\ldots, v_N^{\hat{\alpha},P}(t)\right)^T,\; {\bm v}_{\alpha,Num}: = \left(v_1^{\alpha,Num}(t),\ldots, v_N^{\alpha,Num}(t)\right)^T.$$
We generate the data using agent-based simulation with two different values of the fractional order, given by $\hat{\alpha} = 0.5, 1.2$, and learn the effective fractional order $\alpha$ for the Euler equations by using the GPML algorithm.
To solve the Euler equations  \eqref{Eulern}, we use $\Delta x = 1/256$.

The result of learning the value of $\alpha$ is given in Table \ref{tab:s0512}. For the given data generated by the agent-based model with $\hat{\alpha}$, Table \ref{tab:s0512} shows that we can successfully use the Euler equations  to infer the fractional order $\hat{\alpha}$ with the GPML algorithm. The relative error between the agent-based system and the learned Euler system of equations  is about $1\%$ for the one-dimensional cases.
We also show the convergence of the Bayesian optimization in Figure \ref{fig:convergences0512}. Observe that the learning process converges after about 20 iterations.
We observe from the result that we can use the macroscale Euler equations  to learn the effective non-local influence function, namely, learn the microscale agent-based model directly from particle trajectories generated by the agent-based simulations.

\begin{table}[!h]
\begin{center}
\begin{tabular}{|c|c|c|}
\hline  The given  $\hat{\alpha}$ & Learned value of $\alpha$ & Output $F(s)$ \\
\hline  $0.5$ & 0.4803  & 1.2174e-02 \\
\hline  $1.2$ & 1.1651  & 7.9663e-03 \\
\hline
\end{tabular}
\end{center}
\caption{Learned value of $\alpha$ and relative error of the mean field $F(s)$ for the one-dimensional case.
}\label{tab:s0512}
\end{table}


%
\begin{figure}[htp]
\begin{center}
\begin{minipage}{0.46\linewidth}
\begin{center}
\includegraphics[scale=0.50,angle=0]{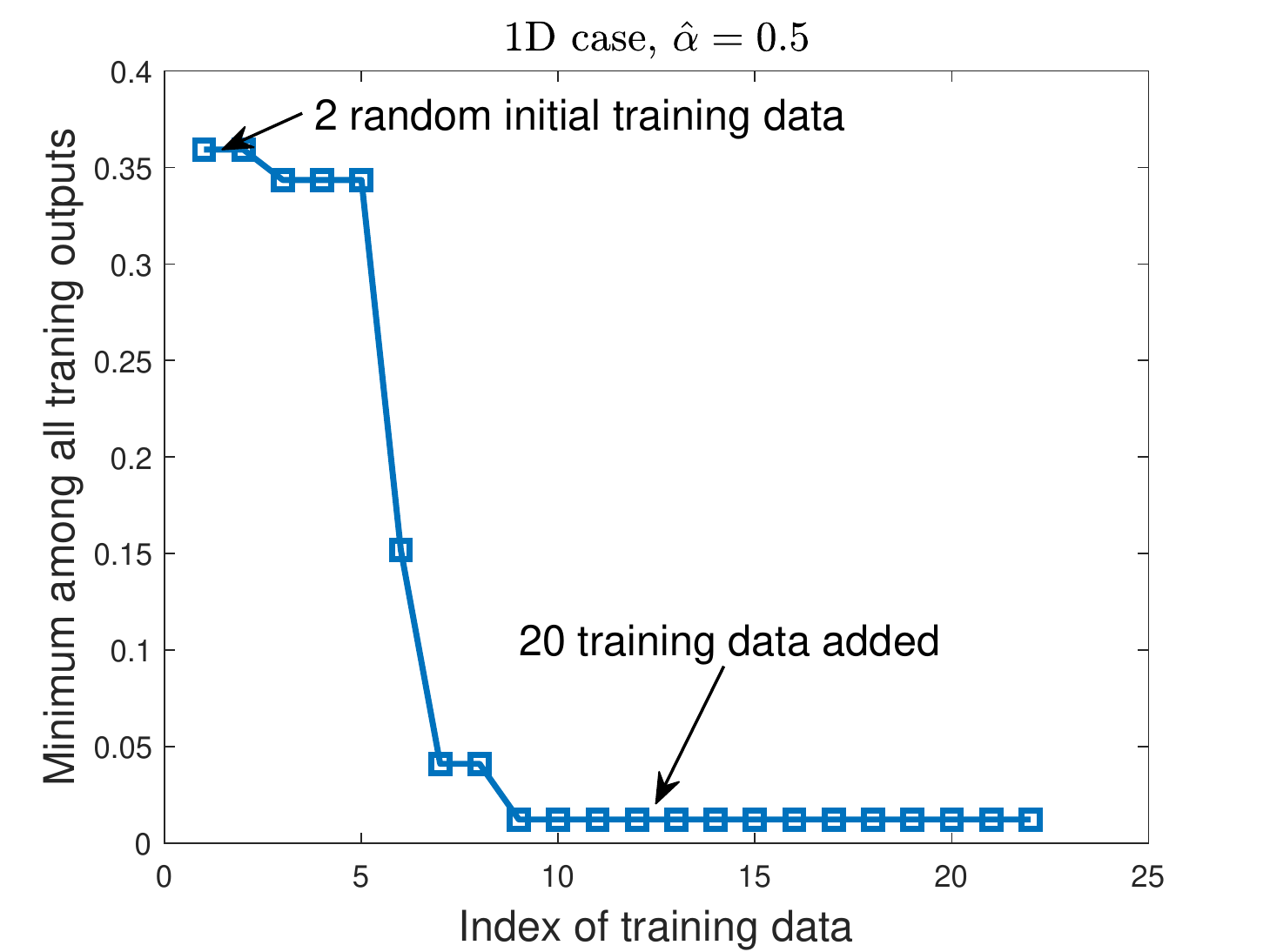}\end{center}
\end{minipage}
\begin{minipage}{0.46\linewidth}
\begin{center}
\includegraphics[scale=0.50,angle=0]{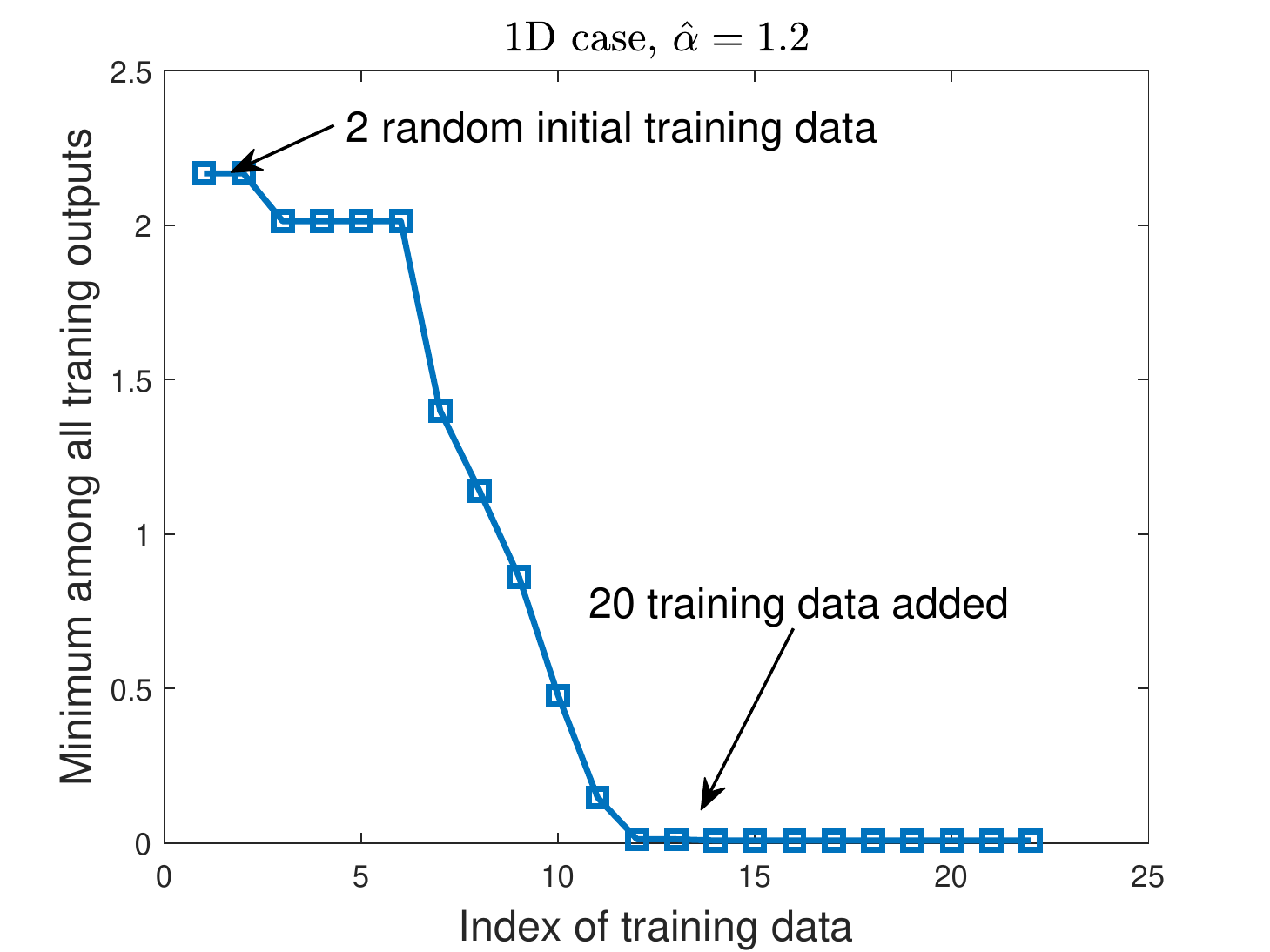}\end{center}
\end{minipage}
\caption{Convergence of the optimization problem for the one-dimensional case: Left: $\hat{\alpha} = 0.5$, right: $\hat{\alpha} = 1.2$.
}\label{fig:convergences0512}
\end{center}
\end{figure}

\subsubsection{Two-dimensional case}

\begin{exam}\label{ex:2D:ML}
We now consider the two-dimensional problem with the same influence function and initial and boundary conditions as that for Example \ref{ex:2D:sln}.
\end{exam}
Similarly as the one-dimensional case,
we obtain the data by solving the agent-based model \eqref{eq:Micro} with the particle method for a given $\hat{\alpha}$ in the squire domain $\Omega$ generating the positions $(x_i^{\hat{\alpha}}(t),y_i^{\hat{\alpha}}(t)), \, i = 1,\ldots,N$ and the corresponding velocities $u_i^{\hat{\alpha},P}(t)$ and $v_i^{\hat{\alpha},P}(t), \,i = 1,\ldots, N$ of the particles at time $t = k \Delta T$, where $\Delta T = 0.1, k = 5,6,\ldots, 20$, here we set $N = 9976$.

Therefore, for a given input data $\alpha$, we obtain the output data as follows: Firstly, solving the Euler equations  \eqref{Eulern} with the finite volume method to obtain the numerical velocity $u_i^{\alpha,Num}(t)$ and $v_i^{\alpha,Num}(t)$ at given points $(x_i^{\hat{\alpha}}(t), y_i^{\hat{\alpha}}(t)), i = 1,\ldots,N$,
then the output is given by
\begin{equation*}
    F(\alpha) = \frac{1}{2}\left(\frac{\|{\bm u}_{s,Num} -{\bm u}_{\hat{\alpha},P}\|}{\|{\bm u}_{\hat{\alpha},P}\|}
    + \frac{\|{\bm v}_{\alpha,Num} -{\bm v}_{\hat{\alpha},P}\|}{\|{\bm v}_{\hat{\alpha},P}\|}\right),
\end{equation*}
where
\begin{equation*}
    \begin{aligned}
       &{\bm u}_{\hat{\alpha},P}: = \left(u_1^{\hat{\alpha},P}(t),\ldots, u_{N}^{\hat{\alpha},P}(t)\right)^T,\; {\bm u}_{\alpha,Num}: = \left(u_1^{\alpha,Num}(t),\ldots, u_{N}^{\alpha,Num}(t)\right)^T, \\
       &{\bm v}_{\hat{\alpha},P}: = \left(v_1^{\hat{\alpha},P}(t),\ldots, v_{N}^{\hat{\alpha},P}(t)\right)^T,\; {\bm v}_{\alpha,Num}: = \left(v_1^{\alpha,Num}(t),\ldots, v_{N}^{\alpha,Num}(t)\right)^T.
    \end{aligned}
\end{equation*}

In the numerical simulations, we test two different agent-based systems with $\hat{\alpha} = 0.5, 1.2$.
To solve the Euler equations  \eqref{Eulern}, the space steps of the finite volume method are $\Delta x = \Delta y = 1/64$.
Initially, we generate the training data with two random values of $\alpha$.
The learned values of $\alpha$ are shown in Table \ref{tab:s05122D}, and the convergence results of the Bayesian optimization are shown in Figure \ref{fig:convergence2Ds0512}. Again, we observe that we can infer the influence function by solving the Euler system of equations  with the data generated by the agent-based model. The relative error between the agent-based system and the learned Euler system of equations  is about $2\%$ for the two-dimensional cases.

\begin{table}[t!]
\begin{center}
\begin{tabular}{|c|c|c|c|}
\hline  The given  $\hat{\alpha}$ & Learned value of $\alpha$ & Output $F(\alpha)$ \\
\hline  $0.5$ & 0.5134  & 2.1428e-02 \\
\hline  $1.2$ & 1.2009  & 2.0233e-02 \\
\hline
\end{tabular}
\end{center}
\caption{Learned value of $\alpha$ and relative error of the mean field $F(s)$ for the two-dimensional case.
}\label{tab:s05122D}
\end{table}


%
\begin{figure}[t!]
\begin{center}
\begin{minipage}{0.46\linewidth}
\begin{center}
\includegraphics[scale=0.50,angle=0]{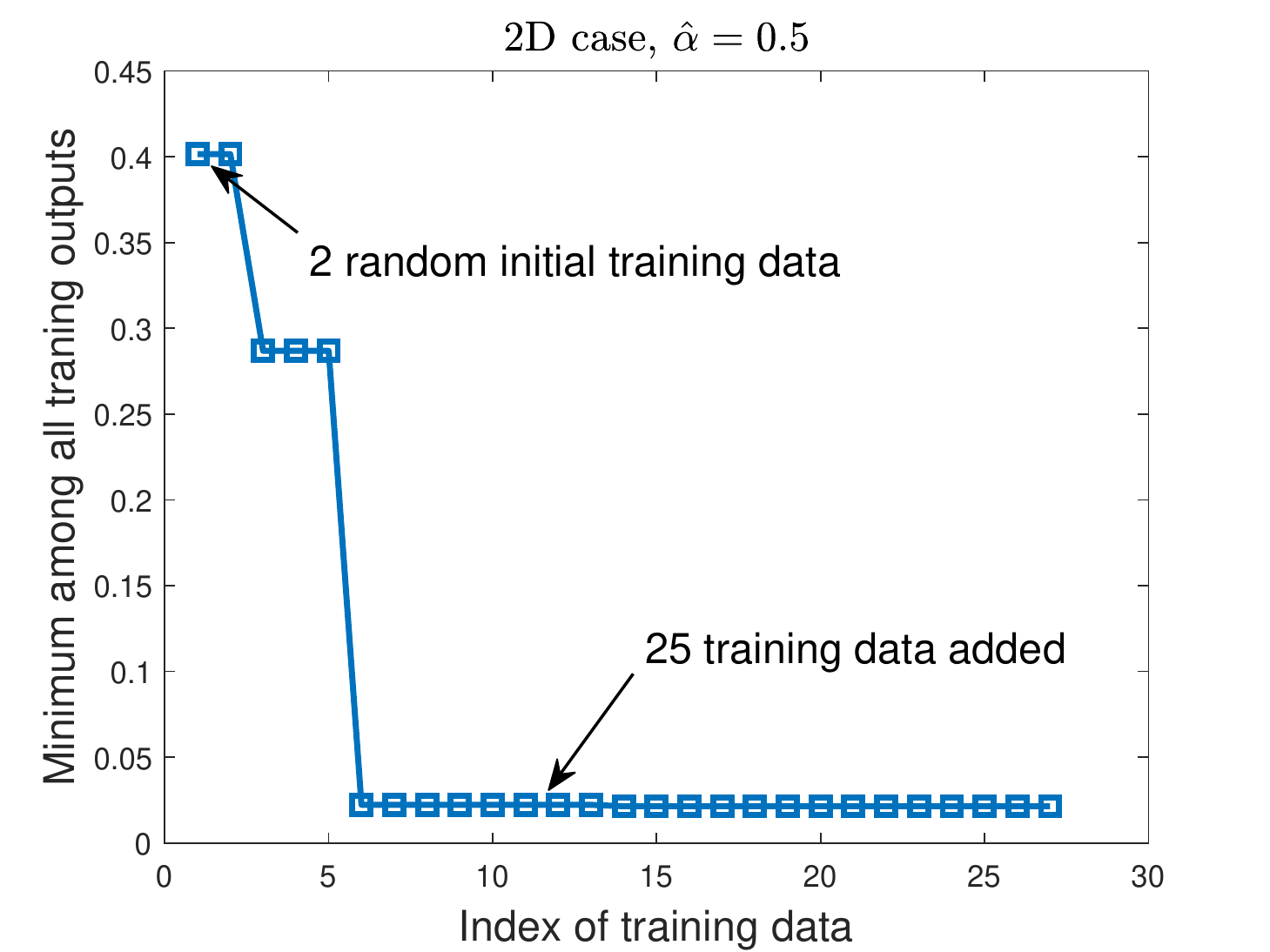}\end{center}
\end{minipage}
\begin{minipage}{0.46\linewidth}
\begin{center}
\includegraphics[scale=0.50,angle=0]{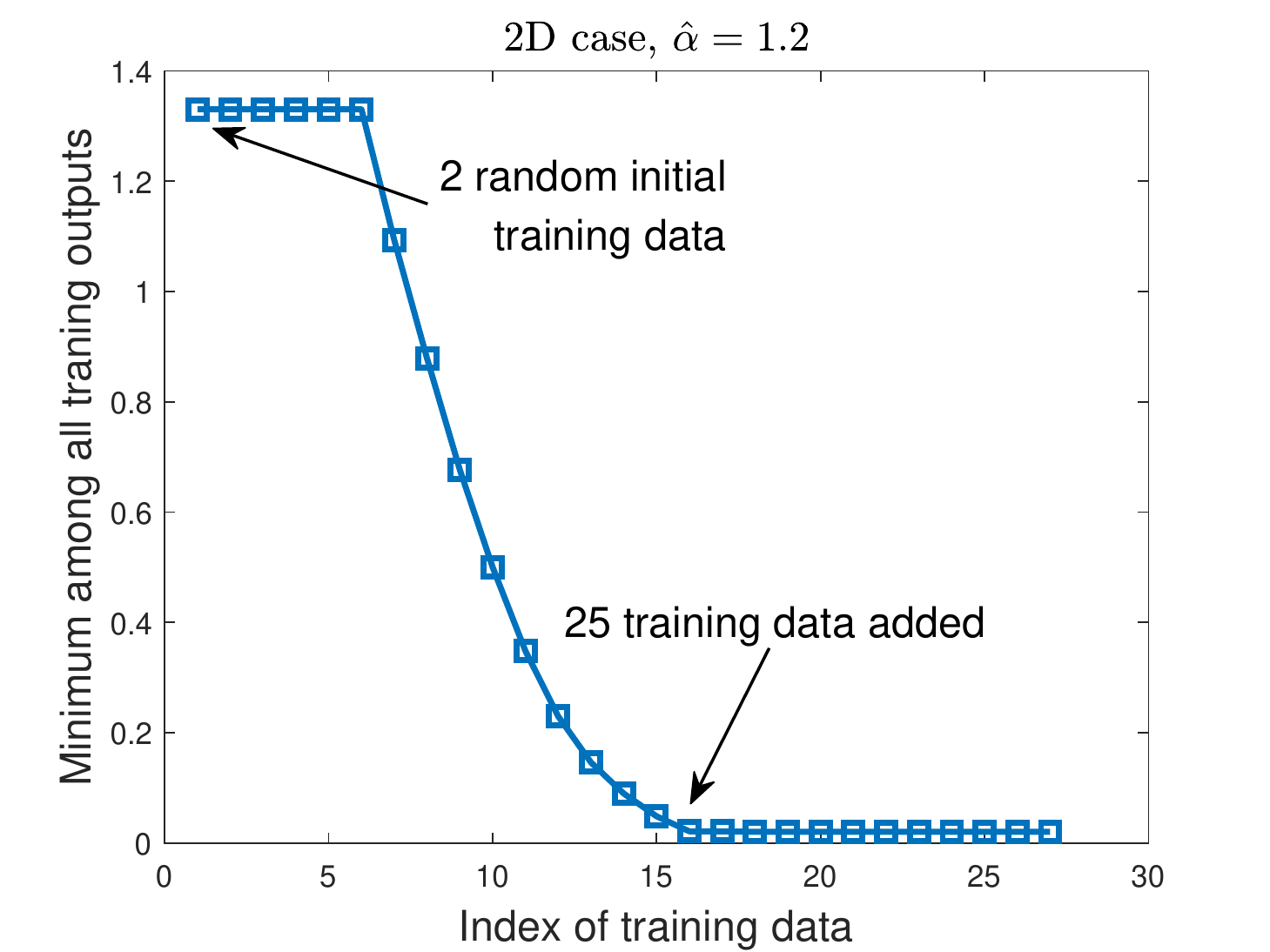}\end{center}
\end{minipage}
\caption{Convergence of the optimization problem for the two-dimensional case: Left: $\hat{\alpha} = 0.5$, right: $\hat{\alpha} = 1.2$.
}\label{fig:convergence2Ds0512}
\end{center}
\end{figure}

\section{Summary and discussion}\label{sec:5}
We presented a comparative study of nonlocal flocking dynamics using both the agent-based model and the continuum Eulerian model. Because animals in flocks generally do not interact mechanically and can be influenced by other individuals a certain distance away, we introduced nonlocal influence functions to consider the effects of animal communication in flocking dynamics. In particular, the microscopic dynamics of each individual in flocks is described by a Cucker-Smale particle model with nonlocal interaction terms, while the evolution of macroscopic quantities, i.e., mean velocity and population density, is modeled by the fractional partial differential equations (fPDEs). We performed agent-based simulations to generate the particle trajectories of each individual in flocking dynamics, and also solved the Euler equations  with nonlocal influence functions using a finite volume scheme. In one- and two-dimensional benchmarks of nonlocal flocking dynamics, we demonstrated that, given specified influence functions, the Euler system of equations  is able to capture the correct evolution of macroscopic quantities consistent with the collective behavior of the agent-based model.

Because experiments on flocking dynamics can get time series of trajectories of individuals in flocks using digital imaging or high-resolution GPS devices, we used the trajectories generated by the agent-based simulations to mimic the field data of tracking logs that can be obtained experimentally. Subsequently, we proposed a learning framework to connect the discrete agent-based model to the continuum fPDEs for nonlocal flocking dynamics. Instead of specifying a phenomenological fPDE with an empirical fractional order, we learned the effective non-local influence function in fPDEs directly from particle trajectories generated by the agent-based simulations. More specifically, we employed a Gaussian process regression (GPR) model implemented with the Bayesian optimization to learn the fractional order of the influence function from the particle trajectories. We showed in both one- and two-dimensional examples that the numerical solution of the learned Euler system of equations  solved by the finite volume scheme, can yield correct density distributions consistent with the collective behaviors of the discrete agent-based system. The relative error between the agent-based system and the learned Euler system of equations  is about $1\%$ for one-dimensional cases and $2\%$ for two-dimensional cases.

Although we only demonstrated the effectiveness of the proposed learning framework in relatively simple cases, i.e., one- and two-dimensional nonlocal flocking dynamics, this method established a direct connection between the discrete agent-based models to the continuum-based PDE models, and could serve as a paradigm on extracting effective governing
equations for nonlocal flocking dynamics directly from particle trajectories. It is worth noting that the agent-based model we used in the present work does not consider stochastic terms, and thus the training data of particle trajectories do not contain noise. However, the experimental data of tracking logs obtained by digital imaging may include noise from measurement uncertainty, where a multi-fidelity framework proposed by Babaee et al.~\cite{2018Babaee} can be used to handle different sources of uncertainties in the learning process. Moreover, in addition to the GPR-based learning method for connecting individual behavior to collective dynamics, it is also of interest to introduce deep learning strategies such as CNN (convolutional neural network)-based method~\cite{2018Pu} and the particle swarm optimization algorithm~\cite{2018Wang} to bridge the gap between flocking theory/modeling and experiments.

\section*{Acknowledgements}
This work was supported by the OSD/ARO/MURI on ``Fractional PDEs for Conservation Laws and Beyond: Theory, Numerics and Applications (W911NF-15-1-0562)".

\end{document}